\newtheorem{theorem}{Theorem}[section]
\newtheorem{corollary}[theorem]{Corollary}
\newtheorem{lemma}[theorem]{Lemma}
\newtheorem{proposition}[theorem]{Proposition}
\newtheorem{observation}[theorem]{Observation}
\theoremstyle{definition}
\theoremstyle{remark}
\newtheorem*{remark}{Remark} 
\theoremstyle{definition}
\newtheorem{example}[theorem]{Example}
\numberwithin{equation}{section}
\begin{document}

\allowdisplaybreaks

\title[Diagonal elements in the Nonnegative Inverse Eigenvalue Problem]{Diagonal elements in the Nonnegative Inverse Eigenvalue Problem}


\author{Richard Ellard}
\address{
	Richard Ellard,
	School of Mathematics and Statistics,
	University College Dublin,
	Belfield, Dublin 4, Ireland
}
\email{richardellard@gmail.com}
\thanks{The authors' work was supported by Science Foundation Ireland under Grant 11/RFP.1/MTH/3157.}

\author{Helena \v{S}migoc}
\address{
	Helena \v{S}migoc,
	School of Mathematics and Statistics,
	University College Dublin,
	Belfield, Dublin 4, Ireland
}
\email{helena.smigoc@ucd.ie}

\subjclass[2010]{15A29, 15B48}

\keywords{Nonnegative Matrices, Eigenvalues, Diagonal elements, Companion matrix, Symmetric functions}

\date{February 2017}

\begin{abstract}
	We say that a list of complex numbers is \emph{realisable} if it is the spectrum of some (entrywise) nonnegative matrix. The Nonnegative Inverse Eigenvalue Problem (NIEP) is the problem of characterising all realisable lists. Although the NIEP remains unsolved, it has been solved in the case where every entry in the list (apart from the Perron eigenvalue) has nonpositive real part. For a given spectrum of this type, we show that a list of nonnegative numbers may arise as the diagonal elements of the realising matrix if and only if these numbers satisfy a remarkably simple inequality. Furthermore, we show that realisation can be achieved by the sum of a companion matrix and a diagonal matrix.
\end{abstract}

\maketitle

\section{Introduction}

The \emph{Nonnegative Inverse Eigenvalue Problem} (NIEP) is the problem of finding necessary and sufficient conditions which determine whether a given list of $n$ complex numbers can arise as the spectrum of some $n\times n$ (entrywise) nonnegative matrix. We say a list $\sigma:=(\lambda_1,\lambda_2,\ldots,\lambda_n)$ is \emph{realisable} if there exists a nonnegative matrix $A$ with spectrum $\sigma$.

Below, we state some well-known conditions which are necessary for a list to be realisable. Such conditions generally involve the power sums
\[
	s_k(\sigma):=\sum_{i=1}^n\lambda_i^k.
\]

\begin{theorem}[\bf Necessary conditions in the NIEP]\label{thm:NecessaryConditions}
	Suppose $\sigma:=(\lambda_1,\lambda_2,\ldots,$ $\lambda_n)$ is the spectrum of a  nonnegative
matrix $A$. Then
	\begin{enumerate}
		\item[\textup{(i)}] $\sigma$ is self-conjugate, i.e. $\overline{\sigma}:=\left(\overline{\lambda}_1,\overline{\lambda}_2,\ldots,\overline{\lambda}_n\right)=\sigma;$
		\item[\textup{(ii)}] $\max_i|\lambda_i|\in\sigma;$
		\item[\textup{(iii)}] $s_k(\sigma)\geq0$ for every positive integer $k$;
		\item[\textup{(iv)}] $s_k(\sigma)^m\leq n^{m-1}s_{km}(\sigma)$ for all positive integers $k$ and $m$.\footnote{These are known as the \emph{JLL Conditions} (see \cite{LoewyLondon, Johnson}).}
	\end{enumerate}
\end{theorem}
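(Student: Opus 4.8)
The plan is to deduce all four conditions from the single observation that, for every positive integer $k$,
\[
    s_k(\sigma) = \sum_{i=1}^n \lambda_i^k = \operatorname{tr}(A^k).
\]
Conditions (i) and (ii) are essentially immediate: since $A$ has real (indeed nonnegative) entries, its characteristic polynomial $\det(xI-A)$ has real coefficients, so its roots are closed under complex conjugation, giving (i); and (ii) is exactly the Perron--Frobenius theorem, which I would simply invoke --- the spectral radius $\rho(A)=\max_i|\lambda_i|$ of a nonnegative matrix is an eigenvalue of $A$.

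For (iii), note that $A\geq 0$ entrywise forces $A^k\geq 0$ entrywise, so $s_k(\sigma)=\operatorname{tr}(A^k)$ is a sum of nonnegative numbers. For (iv), I would set $a_i:=(A^k)_{ii}\geq 0$, so that $s_k(\sigma)=\sum_{i=1}^n a_i$, and then combine two elementary inequalities. The first is the power--mean (equivalently Jensen/H\"older) inequality applied to $t\mapsto t^m$:
\[
    \Bigl(\sum_{i=1}^n a_i\Bigr)^m \leq n^{m-1}\sum_{i=1}^n a_i^m.
\]
The second exploits nonnegativity again: since $A^{km}=(A^k)^m$ and all entries of $A^k$ are nonnegative, expanding the $(i,i)$ entry of the $m$-fold product and keeping only the term in which all intermediate indices equal $i$ gives $(A^{km})_{ii}\geq\bigl((A^k)_{ii}\bigr)^m=a_i^m$; summing over $i$ yields $\sum_{i=1}^n a_i^m\leq\operatorname{tr}(A^{km})=s_{km}(\sigma)$. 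Chaining the two inequalities gives $s_k(\sigma)^m\leq n^{m-1}s_{km}(\sigma)$, which is (iv).

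The only substantive external input is the Perron--Frobenius theorem used for (ii); everything else reduces to the trace identity together with the fact that products and traces of nonnegative matrices remain nonnegative. The one point requiring care is in (iv): getting the two inequalities to point the right way. The power--mean step needs the $a_i$ nonnegative, while the matrix-entry step needs every entry of $A^k$ (not merely the diagonal) nonnegative, so the hypothesis $A\geq 0$ is genuinely used twice.
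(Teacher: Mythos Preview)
Your argument is correct. Each of the four steps is valid: (i) follows from reality of the characteristic polynomial, (ii) is Perron--Frobenius, (iii) is the nonnegativity of $\operatorname{tr}(A^k)$, and in (iv) the chain
\[
    s_k(\sigma)^m=\Bigl(\sum_i a_i\Bigr)^m\le n^{m-1}\sum_i a_i^m\le n^{m-1}\sum_i (A^{km})_{ii}=n^{m-1}s_{km}(\sigma)
\]
is sound, with the power--mean inequality supplying the first step and the entrywise nonnegativity of $A^k$ supplying the second via $(A^{km})_{ii}\ge\bigl((A^k)_{ii}\bigr)^m$.

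As for comparison: the paper does not actually prove this theorem. It is stated as background, with the JLL conditions attributed to Loewy--London and Johnson via citation, and no argument is given in the text. Your proof is the standard one found in those references, so there is nothing to contrast.
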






For $n>4$, the NIEP remains unsolved; however, numerous authors have made significant progress by restricting their attention to lists (or to matrices) of a certain type. In 1949, Sule\v{i}manova \cite{Suleimanova1949} showed that if $\sigma$ is a real list, with one positive every and $n-1$ negative entries, then $s_1(\sigma)\geq0$ is both necessary and sufficient for the existence of a nonnegative matrix with spectrum $\sigma$. This result was later generalised to complex lists by Laffey and \v{S}migoc:

\begin{theorem}{\scshape\cite{LaffeySmigoc}}\label{thm:LS}
  Let $\sigma:=(\rho,\lambda_2,\lambda_3,\ldots,\lambda_n)$ be a list of self-conjugate complex numbers with $\rho\geq0$ and $\mathrm{Re}\,\lambda_i\leq0$: $i=2,3,\ldots,n$. Then $\sigma$ is the spectrum of a nonnegative
matrix if and only if
\begin{equation}\label{eq:SumPositive}
	s_1(\sigma)\geq0
\end{equation}
and
\begin{equation}\label{eq:1stJLL}
	s_1(\sigma)^2\leq ns_2(\sigma).
\end{equation}
  Furthermore, when the above conditions are satisified, $\sigma$ may be realised by a matrix of the form
$C+\alpha I_n$, where $C$ is a nonnegative companion matrix with trace zero and $\alpha$ is a
nonnegative scalar.
\end{theorem}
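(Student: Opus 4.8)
Necessity of \eqref{eq:SumPositive} and \eqref{eq:1stJLL} is immediate from Theorem~\ref{thm:NecessaryConditions}: the first is part~(iii) with $k=1$, and the second is the JLL condition~(iv) with $k=1$, $m=2$. So the content is the sufficiency, which I would prove by exhibiting the matrix. If $C+\alpha I_n$ realises $\sigma$ with $C$ a trace-zero companion matrix, then equating traces forces $\alpha=s_1(\sigma)/n$, which is nonnegative by \eqref{eq:SumPositive}. Conversely, for any $c_2,\dots,c_n\ge0$ the companion matrix of $x^n-\sum_{k=2}^{n}c_kx^{n-k}$ is entrywise nonnegative with trace zero, and adding $\alpha I_n$ shifts its spectrum by $\alpha$. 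So, setting $\alpha:=s_1(\sigma)/n$ and $\beta:=\rho-\alpha$, it suffices to analyse the polynomial
\[
	g(y):=\prod_{i=2}^{n}\bigl(y-(\lambda_i-\alpha)\bigr)=y^{n-1}+d_1y^{n-2}+\dots+d_{n-1}:
\]
one checks $d_1=\beta$ (a consequence of the choice of $\alpha$), so $(y-\beta)g(y)=y^{n}-\sum_{k=2}^{n}c_k y^{n-k}$ with $c_k=\beta d_{k-1}-d_k$ (putting $d_0:=1$, $d_n:=0$), and then $\prod_{i=1}^{n}(x-\lambda_i)=(x-\alpha)^{n}-\sum_{k}c_k(x-\alpha)^{n-k}$. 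Thus the whole problem reduces to proving $\beta d_{k-1}\ge d_k$ for $k=2,\dots,n$.

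Since each $\lambda_i$ ($i\ge2$) has $\operatorname{Re}\lambda_i\le0$ and $\sigma$ is self-conjugate, $\sum_{i\ge2}\lambda_i=\sum_{i\ge2}\operatorname{Re}\lambda_i\le0$, so $0\le\alpha\le\rho$ and $\beta\ge0$; and every root of $g$ has nonpositive real part, so grouping real roots into factors $y+r$ ($r\ge0$) and conjugate pairs into factors $y^{2}+py+q$ ($p,q\ge0$, $p^{2}\le4q$) shows that every $d_k\ge0$. The two extreme cases are then easy or already at hand: $\beta d_{n-1}\ge d_n=0$ is trivial (so $c_n\ge0$), while the case $k=2$, i.e.\ $\beta^{2}\ge d_2$, simplifies --- on substituting $\alpha=s_1(\sigma)/n$ --- to exactly $s_1(\sigma)^{2}\le n\,s_2(\sigma)$; this is where \eqref{eq:1stJLL} is used, and it gives $c_2\ge0$.

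The crux is the intermediate range $k=3,\dots,n-1$, which is \emph{not} forced by the sign hypothesis alone once $g$ has two or more conjugate pairs of roots, so the strength of \eqref{eq:1stJLL} has to be propagated. My plan is an induction on the number of conjugate pairs among $\lambda_2,\dots,\lambda_n$, peeling one factor $y^{2}+py+q$ off $g$ at a time. If $G$ is the cofactor, with coefficients $g_0=1,g_1,g_2,\dots$ and subleading coefficient $\gamma=\beta-p$, then $d_k=g_k+p\,g_{k-1}+q\,g_{k-2}$, and a short computation gives
\[
	\beta d_{k-1}-d_k=(\gamma g_{k-1}-g_k)+(\beta p-q)\,g_{k-2}+\beta q\,g_{k-3}.
\]
The first summand is the analogous quantity for $G$ (controlled by the inductive hypothesis) and $\beta q\,g_{k-3}\ge0$; the obstacle is the middle summand $(\beta p-q)g_{k-2}$, which is negative exactly when $\beta p<q$. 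The delicate point --- and, I expect, the real difficulty of the proof --- is to show that this deficit is always absorbed by the other two summands, using $p^{2}\le4q$ to bound $q$ against the $k=2$ estimate supplied by \eqref{eq:1stJLL}, so that a sufficient margin in the JLL inequality survives each peeling step. Because the intermediate sub-lists produced by the peeling need not themselves satisfy \eqref{eq:1stJLL}, this will require running the induction on a statement somewhat stronger than ``all $c_k\ge0$''; identifying the right such statement is the core of the argument, the endpoint and sign analysis above being routine by comparison.
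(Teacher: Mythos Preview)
The paper does not prove Theorem~\ref{thm:LS} independently; it is quoted from \cite{LaffeySmigoc} and recovered in this paper only as the special case $a_1=\cdots=a_n=s_1(\sigma)/n$ of the main result, Theorem~\ref{thm:DiagonalElements}.

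Your reduction is correct and matches the paper's framework: with $\alpha=s_1(\sigma)/n$ the task is exactly to show $c_k=\beta d_{k-1}-d_k\ge0$ for $k=2,\dots,n$; the case $k=2$ is equivalent to~\eqref{eq:1stJLL}; and all $d_k\ge0$ since the roots of $g$ lie in the closed left half-plane. But the proof you propose is, by your own account, incomplete. You plan an induction on the number of conjugate pairs, observe correctly that the term $(\beta p-q)g_{k-2}$ in the peeling identity can be negative, and then acknowledge that the inductive hypothesis must be strengthened in a way you have not identified. That missing strengthening \emph{is} the proof; what you have written is a setup and an accurate diagnosis of the obstacle, not a resolution of it.

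The paper's route to Theorem~\ref{thm:DiagonalElements} (and hence to Theorem~\ref{thm:LS}) does not proceed by peeling off quadratic factors. It works instead with the unshifted coefficients $e_i(-\lambda_2,\dots,-\lambda_n)$ via the closed-form expression for the $b_j$ in Lemma~\ref{lem:bjSol}, and treats odd and even indices separately. For odd $j$ the relevant inequality is a direct consequence of a Newton-like inequality for self-conjugate half-plane variables (Theorem~\ref{thm:DifferentParity}, imported from \cite{NewtonLikeInequalities}). For even $j$ the argument combines a sign-propagation result for coefficients (Theorem~\ref{thm:LSVariant}, from \cite{HermiteBiehlerExtension}) with a log-concavity inequality for the even-indexed quantities $Q_{2i}$ (Lemma~\ref{lem:ASInequalities}) to show that $b_2\ge0$ forces every $b_{2k}\ge0$. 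So the ``stronger statement'' you are looking for is replaced by nontrivial polynomial inequalities taken from the literature; no elementary inductive peeling argument is given, and it is not clear one exists without comparable machinery.
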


If a realising matrix is known to exist for a given spectrum, one can consider the properties such a matrix might possess. In particular, in this paper, we consider the possible diagonal elements of such a matrix. Specifically, we show that if $\sigma$ contains one positive element and $n-1$ elements with negative real part, then $\sigma$ is realisable by a nonnegative matrix with diagonal elements $\Delta:=(a_1,a_2,\ldots,a_n)$ if and only if
\begin{equation*}
	s_1(\Delta)=s_1(\sigma)
\end{equation*}
and
\begin{equation*}
	s_2(\Delta)\leq s_2(\sigma).
\end{equation*}

Our work is motivated by some earlier constructive methods in the NIEP which rely heavily on diagonal elements (see \cite{SmigocDiagonalElement, SmigocSubmatrixConstruction}). To see how our results may be applied, consider, for example, the following theorem of \v{Smigoc}:

\begin{theorem}\textup{\bf\cite{SmigocDiagonalElement}\,}
	Let $(\rho,\lambda_2,\lambda_3,\ldots,\lambda_m)$ be realisable, where $\rho$ is the Perron
eigenvalue, and let $(\mu_1,\mu_2,\ldots,\mu_n)$ be the spectrum of a nonnegative matrix with a
diagonal element greater than or equal to $\rho$. Then
	\[
		(\mu_1,\mu_2,\ldots,\mu_n,\lambda_2,\lambda_3,\ldots,\lambda_m)
	\]
	is realisable.
\end{theorem}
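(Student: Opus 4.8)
My plan is to prove this by an explicit ``grafting'' construction: take a nonnegative realisation $A$ of $(\mu_1,\ldots,\mu_n)$, locate the distinguished diagonal entry, and replace that one diagonal position by an $m\times m$ block built from a realisation of $(\rho,\lambda_2,\ldots,\lambda_m)$, coupling the new block to the rest of $A$ through rank-one matrices formed from a Perron eigenvector. The resulting matrix will have size $(n-1)+m=n+m-1$, exactly the length of the target list.

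Concretely, I would first choose $A\ge 0$ realising $(\mu_1,\ldots,\mu_n)$ and, after a permutation similarity, assume that the diagonal entry in question sits in position $(1,1)$, so that
\[
	A=\begin{pmatrix}a_{11}&a^T\\ b&A_2\end{pmatrix},\qquad a,b\in\mathbb{R}^{n-1},\quad A_2\ge 0,\quad a_{11}\ge\rho .
\]
Next I would choose $C\ge 0$ realising $(\rho,\lambda_2,\ldots,\lambda_m)$; since $\rho$ is the spectral radius of $C$, the Perron--Frobenius theorem supplies a nonnegative right eigenvector $u$ with $Cu=\rho u$, and I would put $v:=u/(u^Tu)\ge 0$, so that $v^Tu=1$. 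Then I would form the rank-one shift
\[
	C':=C+(a_{11}-\rho)\,uv^T=C+\tfrac{a_{11}-\rho}{u^Tu}\,uu^T\ \ge 0
\]
and the candidate matrix
\[
	\widetilde A:=\begin{pmatrix}C'&u a^T\\ b v^T&A_2\end{pmatrix}\ \ge 0 .
\]
The hypothesis $a_{11}\ge\rho$ is used here, and only here: it is exactly what keeps the correction term $\tfrac{a_{11}-\rho}{u^Tu}uu^T$ nonnegative.

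The verification then has two ingredients. First, Brauer's rank-one perturbation theorem shows that $C'$ has spectrum $(a_{11},\lambda_2,\ldots,\lambda_m)$ --- the eigenvalue $\rho$ of $C$ is moved to $\rho+(a_{11}-\rho)v^Tu=a_{11}$ while the others are left fixed --- and that $C'u=a_{11}u$. Second, I would compute $\det(\widetilde A-zI)$ for $z$ not an eigenvalue of $A_2$ by taking the Schur complement of the block $A_2-zI$: since $(ua^T)(A_2-zI)^{-1}(bv^T)=g(z)\,uv^T$, where $g(z):=a^T(A_2-zI)^{-1}b$ is a scalar, this reduces the computation to $\det\bigl(C'-zI-g(z)uv^T\bigr)$, and a second application of Brauer's theorem (this time to $C'$, with eigenvector $u$) yields
\[
	\det(\widetilde A-zI)=\det(A_2-zI)\,\bigl(a_{11}-z-g(z)\bigr)\prod_{j=2}^{m}(\lambda_j-z).
\]
The same Schur complement applied to $A$ itself gives $\det(A-zI)=\det(A_2-zI)\,\bigl(a_{11}-z-g(z)\bigr)$, so $\det(\widetilde A-zI)=\det(A-zI)\prod_{j=2}^{m}(\lambda_j-z)$. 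Since this is an identity between polynomials in $z$ that holds off a finite set, it holds for all $z$, and reading off its roots identifies the spectrum of $\widetilde A$ as $(\mu_1,\ldots,\mu_n,\lambda_2,\ldots,\lambda_m)$; together with $\widetilde A\ge 0$ this completes the argument.

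The hard part, to my mind, is not this verification --- which is routine Schur-complement and Brauer bookkeeping --- but finding the construction. Grafting $C$ itself into $A$ (taking $C$ in place of $C'$) does not work: the polynomial identity above would then force both $v^Tu=1$ \emph{and} $a_{11}=\rho$, so a direct graft succeeds only when the chosen diagonal entry equals $\rho$ exactly. The rank-one Brauer shift $C\mapsto C'$ is precisely the device that absorbs the surplus $a_{11}-\rho$, and it is nonnegative exactly under the hypothesis of the theorem. A minor point to handle carefully along the way is that $\rho$ always admits a \emph{nonnegative} right eigenvector, even when $C$ is reducible --- this is the portion of Perron--Frobenius that does not require irreducibility --- which is what makes the choice $v=u/(u^Tu)$ legitimate with no further assumption on the realisation $C$.
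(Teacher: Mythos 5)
Your argument is correct as it stands: $C'\geq 0$ follows from $a_{11}\geq\rho$ together with $u,v\geq 0$; Brauer's theorem needs only the right eigenvector $Cu=\rho u$, which Perron--Frobenius supplies for any nonnegative $C$ without irreducibility, exactly as you note; and the Schur complement identity, combined with a second application of Brauer to the $z$-dependent rank-one term $-g(z)uv^T$ (legitimate since $g(z)$ is a scalar for each fixed $z$ outside the spectrum of $A_2$, after which polynomial continuation finishes the job), yields $\det(\widetilde A-zI)=\det(A-zI)\prod_{j=2}^{m}(\lambda_j-z)$, which is what is required. Be aware, though, that the paper you were given does not prove this statement at all: it is quoted from the cited reference \cite{SmigocDiagonalElement} purely as motivation, so there is no internal proof to compare against. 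What you have done is essentially reconstruct \v{S}migoc's original gluing construction from that reference --- border a realising matrix of $(\rho,\lambda_2,\ldots,\lambda_m)$ onto the distinguished diagonal entry of $A$ through rank-one couplings built from a Perron eigenvector, absorbing the surplus $a_{11}-\rho$ by a Brauer shift --- with the mild streamlining that you need only a right Perron eigenvector and any nonnegative $v$ satisfying $v^Tu=1$ (your choice $v=u/(u^Tu)$), rather than a matched pair of left and right Perron vectors. No gap; the proof is complete.
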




\section{Statement of main result}

Below, we state the main result of this paper:

\begin{theorem}\label{thm:DiagonalElements}
	Let $\Delta:=(a_1,a_2,\ldots,a_n)$, where $a_1\geq a_2\geq\cdots\geq a_n\geq0$, let $\rho\geq0$ and let $(\lambda_2,\lambda_3,\ldots,\lambda_n)$ be a self-conjugate list of complex numbers with nonpositive real parts. Then the list $\sigma:=(\rho,\lambda_2,\lambda_3,\ldots,\lambda_n)$ is the spectrum of a nonnegative matrix with diagonal elements $\Delta$ if and only if
	\begin{equation}\label{eq:s1ACondition}
		s_1(\Delta)=s_1(\sigma)
	\end{equation}
	and
	\begin{equation}\label{eq:s2ACondition}
		s_2(\Delta)\leq s_2(\sigma).
	\end{equation}
	
Furthermore, if (\ref{eq:s1ACondition}) and (\ref{eq:s2ACondition}) are satisfied, then $\sigma$ may be realised by a nonnegative matrix of the form
\begin{equation}\label{eq:AMatrix}
	A:=\left[
		\begin{array}{ccccc}
			a_1 & 1 &  &  &  \\
			& a_2 & 1 &  &  \\
			&  & \ddots & \ddots &  \\
			&  &  & a_{n-1} & 1 \\
			b_n & b_{n-1} & \cdots & b_2 & a_n \\
		\end{array}
	\right].
\end{equation}
\end{theorem}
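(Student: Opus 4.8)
The necessity of the conditions is easy and should be dealt with first. If $A$ is nonnegative with spectrum $\sigma$ and diagonal $\Delta$, then $s_1(\Delta)=\operatorname{tr}A=s_1(\sigma)$ is immediate. For the second, note $s_2(\sigma)=\operatorname{tr}(A^2)=\sum_i a_i^2+\sum_{i\neq j}A_{ij}A_{ji}$, and since $A$ is nonnegative the off-diagonal contribution is $\geq 0$, giving $s_2(\Delta)=\sum_i a_i^2\leq s_2(\sigma)$. So the whole content of the theorem is the sufficiency together with the companion-plus-diagonal realisation.

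For sufficiency I would exhibit a matrix of the form (\ref{eq:AMatrix}) directly. Write $A=C+D$ where $D=\operatorname{diag}(a_1,\ldots,a_n)$ and $C$ has the superdiagonal of ones and the bottom row $(b_n,b_{n-1},\ldots,b_2,0)$; here $A$ is a lower Hessenberg matrix whose characteristic polynomial can be computed by cofactor expansion along the first column (or by the standard companion-type recursion). Expanding, $\det(xI-A)=\prod_{i=1}^n(x-a_i)-\sum_{k=2}^n b_k\prod_{i=k}^{n-1}\!\big(\text{wait: }x-a_i\big)$— more precisely one gets $\det(xI-A)=\prod_{i=1}^{n}(x-a_i)-\sum_{j=2}^{n}b_j\prod_{i=n-j+2}^{n}(x-a_i)$ or a similar clean expression, so that the coefficients of $\det(xI-A)$ are affine in the unknowns $b_2,\ldots,b_n$. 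The target is to choose $b_2,\ldots,b_n\geq0$ so that $\det(xI-A)=\prod_{i=1}^n(x-\lambda_i)$ with $\lambda_1=\rho$. Matching coefficients of $x^{n-1}$ is exactly $s_1(\Delta)=s_1(\sigma)$, which is given; the remaining $n-1$ coefficient equations determine $b_2,\ldots,b_n$ uniquely (the map from $(b_2,\ldots,b_n)$ to the lower-order coefficients is triangular and invertible), so the only real issue is showing these forced values of $b_j$ are nonnegative.

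To prove $b_j\geq0$ I expect to invoke Theorem~\ref{thm:LS} as a black box rather than re-deriving positivity from scratch. The strategy: by Theorem~\ref{thm:LS}, conditions (\ref{eq:SumPositive}) and (\ref{eq:1stJLL}) (which here follow from $\rho\geq0$, $s_1(\sigma)\geq s_1(\Delta)\geq 0$ wait—actually $s_1(\sigma)=s_1(\Delta)\geq 0$, and $s_1(\sigma)^2\leq n s_2(\sigma)$ needs to be checked from (\ref{eq:s2ACondition}) plus $s_1(\Delta)^2\leq n s_2(\Delta)$ by Cauchy–Schwarz) guarantee that $\sigma$ is realisable; but that only gives a companion-plus-\emph{scalar} matrix $C'+\alpha I_n$, i.e. the special case $a_1=\cdots=a_n=\alpha$. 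The passage from the constant diagonal to a general diagonal $\Delta$ with the same sum is where the hypothesis $s_2(\Delta)\leq s_2(\sigma)$ must be used in full strength, and I anticipate this is the technical heart of the paper: presumably one shows that the set of diagonals $\Delta$ realisable by a matrix of the form (\ref{eq:AMatrix}) is ``majorisation-like'' convex and that $s_1,s_2$ cut out exactly this set. Concretely I would parametrise: fix the $a_i$, solve the linear system for the $b_j$, express each $b_j$ as an explicit polynomial (via Newton's identities) in the power sums $s_k(\sigma)$ and the elementary symmetric functions of the $a_i$, and then prove by induction on $j$ that $b_j\geq 0$, feeding in (\ref{eq:s2ACondition}) at the step where the sign would otherwise be in doubt. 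The main obstacle is exactly this sign analysis — converting the single scalar inequality $s_2(\Delta)\leq s_2(\sigma)$ into nonnegativity of all $n-1$ coefficients $b_j$ — and I would expect to need an auxiliary lemma about symmetric functions (the paper's keyword list mentions ``Symmetric functions'', which supports this) together with the fact that $\operatorname{Re}\lambda_i\leq 0$ to control signs of the relevant symmetric function evaluations.
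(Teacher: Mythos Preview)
Your high-level structure is right: necessity is immediate, and for sufficiency one computes the characteristic polynomial of $A$, solves the resulting triangular linear system for $b_2,\ldots,b_n$ uniquely, and then must prove each $b_j\geq 0$. But the execution you sketch diverges from the paper in ways that matter.

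First, a small but consequential error: your expansion of $\det(xI-A)$ has the wrong factors. The correct formula (Lemma~\ref{lem:ACharPol}) is
\[
f(x)=\prod_{i=1}^n(x-a_i)-\sum_{j=2}^n b_j\prod_{i=1}^{\,n-j}(x-a_i),
\]
i.e.\ the products run over the \emph{top-left} diagonal entries $a_1,\ldots,a_{n-j}$, not the bottom-right ones. This is why the ordering $a_1\geq\cdots\geq a_n$ is essential (the paper's final example shows the theorem fails for other orderings), and your formula would lose that.

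Second, the plan to invoke Theorem~\ref{thm:LS} as a black box and then pass from the constant-diagonal case to general $\Delta$ by a ``majorisation-like convexity'' argument does not work and is not what the paper does. There is no evident deformation or convexity mechanism that propagates nonnegativity of the $b_j$ from $\Delta=(\alpha,\ldots,\alpha)$ to arbitrary $\Delta$. Instead, the paper derives a closed-form expression $b_j=\sum_{i=0}^{j}\mathscr{K}_{i,j}$ (Lemma~\ref{lem:bjSol}), where $\mathscr{K}_{i,j}$ collects the degree-$i$ terms in the $a_k$'s, built from complete homogeneous symmetric functions of truncated lists; proving this formula requires a nontrivial identity between elementary and complete homogeneous symmetric functions on truncated lists (Lemma~\ref{lem:STRelation}). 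One then shows $\mathscr{K}_{i,j}\geq 0$ for all $i\geq 1$ by an elementary inequality, so the only possibly negative piece is $\mathscr{K}_{0,j}=c_1c_{j-1}-c_j$ with $c_i=e_i(-\lambda_2,\ldots,-\lambda_n)$. For odd $j$ this is nonnegative by a Newton-like inequality for self-conjugate variables with nonnegative real parts (Theorem~\ref{thm:DifferentParity}, imported from \cite{NewtonLikeInequalities}). For even $j$ it can be negative, and the paper controls it by comparing $b_{2k}$ to $b_2$ via auxiliary quantities $W_1,W_2$ and a further Newton-type inequality (Lemma~\ref{lem:ASInequalities}) together with a sign-pattern result for coefficients of $(x-\rho)g(x)$ (Theorem~\ref{thm:LSVariant}, from \cite{HermiteBiehlerExtension}). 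The hypothesis $s_2(\Delta)\leq s_2(\sigma)$ enters precisely as $b_2\geq 0$, which is then leveraged to force $b_{2k}\geq 0$.

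So your anticipated ``induction on $j$ feeding in (\ref{eq:s2ACondition})'' is morally in the right direction, but the actual argument is not inductive in $j$; it splits by parity and, for the even case, relies on two external results about symmetric-function inequalities that you would need to either cite or reprove.
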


Observe that, by the Cauchy-Schwarz inequality, if a list $\Delta$ satisfying (\ref{eq:s1ACondition}) and (\ref{eq:s2ACondition}) exists, then (\ref{eq:SumPositive}) and (\ref{eq:1stJLL}) must hold. Furthermore, note that if $a_1=a_2=\cdots=a_n=s_1(\sigma)/n$ in Theorem \ref{thm:DiagonalElements}, then (\ref{eq:s2ACondition}) becomes the JLL condition (\ref{eq:1stJLL}) and the matrix $A$ takes the form $A=C+\frac{s_1(\sigma)}{n}I_n$, where $C$ is a companion matrix with trace zero. Hence Theorem \ref{thm:DiagonalElements} may be seen as a generalisation of Theorem \ref{thm:LS}.

Note that the matrix $A$, given in (\ref{eq:AMatrix}) is the sum of a companion matrix and a diagonal matrix. Let us begin by making some general observations about matrices of this form. First, we compute the characteristic polynomial of $A$:

\begin{lemma}\label{lem:ACharPol}
	Let $A$ be defined as in (\ref{eq:AMatrix}). Then the characteristic polynomial of $A$ is given by
	\begin{equation}\label{eq:ACharPol}
		f(x):=\prod_{i=1}^n(x-a_i)-b_2\prod_{i=1}^{n-2}(x-a_i)-b_3\prod_{i=1}^{n-3}(x-a_i)-\cdots-b_n.
	\end{equation}
\end{lemma}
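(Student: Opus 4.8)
The plan is to compute $\det(xI_n - A)$ by cofactor expansion along the last row. Let me set up the matrix $xI_n - A$: it has $x - a_i$ on the diagonal, $-1$ on the superdiagonal in rows $1$ through $n-1$, and the last row is $(-b_n, -b_{n-1}, \ldots, -b_2, x - a_n)$.

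I would expand $\det(xI_n - A)$ along the last row. The term from the $(n,n)$ entry contributes $(x - a_n)$ times the determinant of the upper-left $(n-1)\times(n-1)$ block, which is upper triangular with diagonal $x - a_1, \ldots, x - a_{n-1}$, hence equals $(x-a_n)\prod_{i=1}^{n-1}(x-a_i) = \prod_{i=1}^n (x-a_i)$. For the term coming from the $(n,j)$ entry $-b_{n-j+1}$ (for $j = 1, \ldots, n-1$), the relevant minor is obtained by deleting row $n$ and column $j$; I would argue this minor is block upper-triangular: the rows $1, \ldots, j-1$ restricted to columns $1, \ldots, j-1$ (after deletion) form an upper-triangular block with diagonal $x - a_1, \ldots, x - a_{j-1}$, while rows $j, \ldots, n-1$ restricted to the remaining columns $j+1, \ldots, n$ form an upper-triangular block whose diagonal consists entirely of the superdiagonal $-1$ entries (there are $n - j$ of them). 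Hence this minor equals $(-1)^{n-j}\prod_{i=1}^{j-1}(x-a_i)$. Combining with the sign $(-1)^{n+j}$ from the cofactor and the factor $-b_{n-j+1}$, the total contribution is $-b_{n-j+1}\prod_{i=1}^{j-1}(x-a_i)$; reindexing via $k = n - j + 1$ (so $j - 1 = n - k$) gives $-b_k \prod_{i=1}^{n-k}(x - a_i)$ for $k = 2, \ldots, n$, which matches (\ref{eq:ACharPol}).

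The only delicate point is getting the signs and the structure of the minors exactly right — in particular verifying that the minor for column $j$ genuinely splits as claimed and that the count of $-1$'s on its lower block is $n-j$, so that the $(-1)^{n-j}$ exactly cancels against the cofactor sign to leave an overall $-b_k$. An alternative, perhaps cleaner, route is induction on $n$: expanding along the first column, $\det(xI_n - A) = (x - a_1)\det(xI_{n-1} - A')+ (-1)^{n+1}(-b_n)\cdot(\text{minor})$, where $A'$ is the analogous $(n-1)\times(n-1)$ matrix with parameters $a_2, \ldots, a_n$ and $b_2, \ldots, b_{n-1}$; the minor deleting row $n$ and column $1$ is upper triangular with all $-1$'s on the diagonal, contributing $(-1)^{n-1}$, so that term is $-b_n$. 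The inductive hypothesis then gives the stated formula after shifting indices. I expect either approach to go through without real obstacles; the bookkeeping of indices in the recursive version is the main thing to handle carefully.
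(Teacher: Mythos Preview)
Your proposal is correct, and your ``alternative, perhaps cleaner, route'' (Laplace expansion along the first column followed by induction on $n$) is exactly the approach the paper takes. Your primary route via expansion along the last row is equally valid and has the minor advantage of avoiding induction; the sign and minor computations you outline are accurate.
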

\begin{proof}
	The fact that (\ref{eq:ACharPol}) is the characteristic polynomial of $A$ can be easily verified by computing the determinant of $xI_n-A$ by Laplace expansion along the first column and then using induction on $n$. 
\end{proof}

The entry $b_2$ has a special significance: if $A$ has spectrum $\sigma$, then, applying Newton's identities to the coefficient of $x^{n-2}$ in (\ref{eq:ACharPol}), we see that
\begin{align}\label{eq:b2}
	b_2&=\frac{1}{2}\left( s_1(\Delta)^2-s_2(\Delta) \right)-\frac{1}{2}\left( s_1(\sigma)^2-s_2(\sigma) \right)\notag\\
	&=\frac{1}{2}\left( s_2(\sigma)-s_2(\Delta) \right).
\end{align}
Hence, condition (\ref{eq:s2ACondition}) is directly related to the nonnegativity of $b_2$.

In general, it is easy to see that conditions (\ref{eq:s1ACondition}) and (\ref{eq:s2ACondition}) are necessary for the existence of a nonnegative matrix with spectrum $\sigma$ and diagonal elements $\Delta$:
\begin{observation}\label{obv:DiagonalElementsNecessity}
	Let $\sigma:=(\lambda_1,\lambda_2,\ldots,\lambda_n)$ be a list of complex numbers and let $\Delta:=(a_1,a_2,\ldots,a_n)$ be a list of nonnegative numbers. If $\sigma$ is the spectrum of a nonnegative matrix with diagonal elements $\Delta$, then
	\[
		s_1(\Delta)=s_1(\sigma)
	\]
	and
	\[
		s_m(\Delta)\leq s_m(\sigma) \hspace{2mm} : \hspace{6mm} m=2,3,\ldots
	\]
\end{observation}

Observation \ref{obv:DiagonalElementsNecessity} follows from the simple fact that $(A^m)_{ii}\geq a_i^m$ for each $i$. Note that, assuming $s_1(\Delta)=s_1(\sigma)$, we have
\[
	\sum_{i=1}^n\left( a_i-\frac{s_1(\sigma)}{n} \right)^2=s_2(\Delta)-\frac{s_1(\sigma)^2}{n}.
\]
In this case, (\ref{eq:s2ACondition}) is equivalent to
\begin{equation}\label{eq:CloseWRTl2}
	\sum_{i=1}^n\left( a_i-\frac{s_1(\sigma)}{n} \right)^2\leq s_2(\sigma)-\frac{s_1(\sigma)^2}{n},
\end{equation}
i.e. condition (\ref{eq:s2ACondition}) says that $(a_1,a_2,\ldots,a_n)$ must be sufficiently close to $(s_1(\sigma)/n,$ $s_1(\sigma)/n,\ldots,s_1(\sigma)/n)$ with respect to the $\ell^2$ norm.

\section{Small $n$}

Suppose $\lambda_1\geq\lambda_2$ and $a_1\geq a_2$. The general $2\times2$ matrix
\[
\left[
\begin{array}{cc}
 a_1 & a_{12} \\
 a_{21} & a_2
\end{array}
\right]
\]
has spectrum $(a_1+t,a_2-t)$ if and only if
\[
	a_{12}a_{21}=t(a_1-a_2+t).
\]
Since we require $a_{12}$ and $a_{21}$ to be nonnegative, it is not difficult to see that $(\lambda_1,\lambda_2)$ is the spectrum of a nonnegative matrix with diagonal elements $(a_1,a_2)$ if and only if $a_1\leq\lambda_1$ and $a_1+a_2=\lambda_1+\lambda_2$. If these conditions are satisfied, then, for example, choosing $a_{12}=1$ yields a matrix of the form (\ref{eq:AMatrix}).

For $n=3$, it is also relatively easy to show that (\ref{eq:s1ACondition}) and (\ref{eq:s2ACondition}) are essentially the required conditions, and some equivalent conditions for this case have recently been given by Hwang and Kim \cite{HwangKim}. For completeness, we will show that, under the appropriate conditions, the given spectrum can be realised by a nonnegative matrix of the form (\ref{eq:AMatrix}).

It is useful to distinguish the real and complex cases. Let us first consider the case when $\lambda_1$, $\lambda_2$ and $\lambda_3$ are real.

\begin{proposition}\label{prop:n=3RealDiagonalElements}
	If $\lambda_1\geq\lambda_2\geq\lambda_3$ and $a_1\geq a_2\geq a_3\geq0$, then the list $\sigma:=(\lambda_1,\lambda_2,\lambda_3)$ is the spectrum of a nonnegative matrix with diagonal elements $\Delta:=(a_1,a_2,a_3)$ if and only if the following conditions hold:
	\begin{enumerate}
		\item[\textup{(i)}] $\lambda_2\leq a_1\leq\lambda_1$;
		\item[\textup{(ii)}] $s_1(\Delta)=s_1(\sigma)$;
		\item[\textup{(iii)}] $s_2(\Delta)\leq s_2(\sigma)$.
	\end{enumerate}
	
	Furthermore, if (i)--(iii) are satisfied, then $(\lambda_1,\lambda_2,\lambda_3)$ may be realised by a nonnegative matrix of the form (\ref{eq:AMatrix}).
\end{proposition}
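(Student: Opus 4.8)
The plan is to verify necessity quickly and then produce an explicit realising matrix of the form (\ref{eq:AMatrix}) to establish sufficiency. For necessity, (ii) and (iii) are immediate from Observation \ref{obv:DiagonalElementsNecessity}, and the upper bound $a_1\le\lambda_1$ in (i) holds because $a_1$ is a diagonal entry of a nonnegative matrix with Perron root $\lambda_1$. For the lower bound $a_1\ge\lambda_2$, I would argue that deleting the row and column through $a_1$ leaves a $2\times 2$ principal submatrix whose eigenvalues interlace $(\lambda_1,\lambda_2,\lambda_3)$ (since the matrix may be taken symmetric? — no, in general it need not be, so instead I would use the fact that a nonnegative matrix with a given spectrum and diagonal element $a_1$ forces $a_1\ge\lambda_2$ via the trace and the structure; more robustly, this is exactly the content of the analysis above and of \cite{HwangKim}, so I would cite it or reprove it using the characteristic polynomial identity below).

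For sufficiency, write the target polynomial as $f(x)=(x-\lambda_1)(x-\lambda_2)(x-\lambda_3)$ and, matching against (\ref{eq:ACharPol}) with $n=3$,
\begin{equation*}
	f(x)=(x-a_1)(x-a_2)(x-a_3)-b_2(x-a_1)-b_3,
\end{equation*}
so that comparing coefficients gives $b_2=\tfrac12\bigl(s_2(\sigma)-s_2(\Delta)\bigr)$ (as in (\ref{eq:b2})) and $b_3=f(a_1)\cdot(-1)$ suitably; explicitly $b_3 = -f(a_1) = -(a_1-\lambda_1)(a_1-\lambda_2)(a_1-\lambda_3)$. The crux is then to show $b_2\ge 0$ and $b_3\ge 0$. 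Nonnegativity of $b_2$ is exactly (iii). For $b_3$: the factor $(a_1-\lambda_1)\le 0$ by the upper bound in (i), the factor $(a_1-\lambda_2)\ge 0$ by the lower bound in (i), and the factor $(a_1-\lambda_3)\ge 0$ since $a_1\ge a_3\ge 0$ and $\lambda_3\le\lambda_2\le a_1$ — wait, one must check $a_1\ge\lambda_3$, which follows from $a_1\ge\lambda_2\ge\lambda_3$. Hence $b_3=-(\le 0)(\ge 0)(\ge 0)\ge 0$, giving a nonnegative matrix of the desired form.

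The main obstacle I anticipate is proving the lower bound $a_1\ge\lambda_2$ in (i) on the \emph{necessity} side, since this does not follow from the crude inequalities $(A^m)_{ii}\ge a_i^m$ and genuinely uses that $A$ is $3\times 3$; I would handle it by noting that if a nonnegative $A$ has diagonal entry $a_1$ and spectrum $\sigma$, then the $2\times 2$ principal submatrix $B$ obtained by deleting that row and column has $\det B\ge 0$ and $\operatorname{tr}B = s_1(\sigma)-a_1$, while $\det(xI-A)$ expanded about that diagonal position yields $\det B = \frac{f'(a_1)}{1}$ up to sign — more precisely $f(x)=(x-a_1)\det(xI-B) - (\text{nonnegative cross terms})$, forcing $\det(a_1 I - B)\le 0$, i.e. $a_1$ lies between the two eigenvalues of $B$, which together with $\operatorname{tr}B=\lambda_2+\lambda_3+\lambda_1-a_1$ and $\det B\ge 0$ pins down $a_1\ge\lambda_2$. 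A secondary nuisance is simply bookkeeping the coefficient comparison, but that is routine given Lemma \ref{lem:ACharPol}.
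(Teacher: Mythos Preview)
Your sufficiency argument is correct and essentially identical to the paper's: substitute $x=a_1$ into the characteristic-polynomial identity of Lemma~\ref{lem:ACharPol} to read off $b_3=-(a_1-\lambda_1)(a_1-\lambda_2)(a_1-\lambda_3)$, and then check the signs of the three factors using~(i). Likewise, your appeal to Observation~\ref{obv:DiagonalElementsNecessity} for (ii) and (iii) is fine.

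The gap is in your necessity argument for $a_1\ge\lambda_2$. From the expansion $f(x)=(x-a_1)\det(xI-B)-(\text{cross terms})$ you \emph{cannot} conclude $\det(a_1I-B)\le 0$: at $x=a_1$ the factor $(x-a_1)$ vanishes, so $\det(a_1I-B)$ is multiplied by zero and the inequality tells you nothing about it. Your subsequent deduction that ``$a_1$ lies between the two eigenvalues of $B$'' therefore has no foundation. What \emph{does} follow from evaluating at $x=a_1$ is simply $f(a_1)\le 0$, because the cross terms $a_{13}a_{31}(a_1-a_2)+a_{12}a_{21}(a_1-a_3)+a_{12}a_{23}a_{31}+a_{21}a_{32}a_{13}$ are nonnegative thanks to $a_1\ge a_2\ge a_3\ge 0$. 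That is exactly the route the paper takes: from $(a_1-\lambda_1)(a_1-\lambda_2)(a_1-\lambda_3)\le 0$ one gets $a_1\le\lambda_3$ or $\lambda_2\le a_1\le\lambda_1$, and the first alternative is ruled out by the trace condition $a_1+a_2+a_3=\lambda_1+\lambda_2+\lambda_3$ together with $a_1\ge a_2\ge a_3$, which would force $a_1=a_2=a_3=\lambda_1=\lambda_2=\lambda_3$. This is both shorter than the interlacing detour through $B$ and, as a bonus, delivers $a_1\le\lambda_1$ at the same time, so the separate Perron--Frobenius step is not needed.
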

\begin{proof}
	The necessity of (ii) and (iii) was shown in Observation \ref{obv:DiagonalElementsNecessity}. Now suppose the matrix
	\[
A:=\left[
\begin{array}{ccc}
 a_1 & a_{12} & a_{13} \\
 a_{21} & a_2 & a_{23} \\
 a_{31} & a_{32} & a_3
\end{array}
\right]
	\]
	has spectrum $(\lambda_1,\lambda_2,\lambda_3)$. The characteristic polynomial of $A$ is
	\begin{align}\label{eq:3x3CharPol}
		(x-\lambda_1)(x-\lambda_2)&(x-\lambda_3)=\left(x-a_1\right)\left(x-a_2\right)\left(x-a_3\right)\\
		&-a_{23}a_{32}\left(x-a_1\right)-a_{13}a_{31}\left(x-a_2\right)\notag\\
		&-a_{12}a_{21}\left(x-a_3\right)-a_{12}a_{23}a_{31}-a_{21}a_{32}a_{13}.\notag
	\end{align}
	Letting $x=a_1$ in (\ref{eq:3x3CharPol}) gives
	\begin{align*}
		(a_1-\lambda_1)(a_1-\lambda_2)(a_1-\lambda_3)=&-a_{13}a_{31}\left(a_1-a_2\right)-a_{12}a_{21}\left(a_1-a_3\right)\\
		&-a_{12}a_{23}a_{31}-a_{21}a_{32}a_{13}\\
		\leq&\;0.
	\end{align*}
	Therefore, either $a_1\leq\lambda_3$ or $\lambda_2\leq a_1\leq\lambda_1$. Since $a_1+a_2+a_3=\lambda_1+\lambda_2+\lambda_3$, the former case would imply $a_1=a_2=a_3=\lambda_1=\lambda_2=\lambda_3$. Hence (i) holds.
	
	Now suppose (i)--(iii) hold and the matrix
	\begin{equation}\label{eq:3x3AMatrix}
		A:=\left[
			\begin{array}{ccc}
				a_1 & 1 & 0 \\
				0 & a_2 & 1 \\
				b_3 & b_2 & a_3
			\end{array}
		\right]
	\end{equation}
	has spectrum $(\lambda_1,\lambda_2,\lambda_3)$. By (\ref{eq:b2}), $b_2\geq0$. According to Lemma \ref{lem:ACharPol}, the characteristic polynomial of $A$ is
	\begin{equation}\label{eq:3x3CharPol2}
		(x-\lambda_1)(x-\lambda_2)(x-\lambda_3)=\left(x-a_1\right)\left(x-a_2\right)\left(x-a_3\right)-b_2(x-a_1)-b_3.
	\end{equation}
	Hence, letting $x=a_1$ in (\ref{eq:3x3CharPol2}), we see that
	\[
		b_3=-(a_1-\lambda_1)(a_1-\lambda_2)(a_1-\lambda_3),
	\]
	which, by (i), is nonnegative.
\end{proof}

Note that if we require the realising matrix to be symmetric, then the conditions on $(a_1,a_2,a_3)$ are more restrictive (see \cite{Fiedler}). For a survey and comparison of numerous results on the \emph{Symmetric Nonnegative Inverse Eigenvalue Problem}, the reader may be interested in \cite{ConnectingSufficientConditions}

\begin{example}\label{ex:3x3Real/SymmetricDifferent}
	The matrix
\[
\left[
\begin{array}{ccc}
 6 & 1 & 0 \\
 0 & 4 & 1 \\
 20 & 1 & 0
\end{array}
\right]
\]
	has spectrum $(\lambda_1,\lambda_2,\lambda_3)=(7,2,1)$ and diagonal elements $(a_1,a_2,a_3)$ $=(6,4,0)$; however, since $a_3<\lambda_3$, no symmetric nonnegative matrix exists with this spectrum and these diagonal elements (see \cite[Theorem 4.8]{Fiedler}).
\end{example}

Let us now consider the case when $\lambda_2$ and $\lambda_3$ are complex.

\begin{proposition}
	Let $\sigma:=(\rho,\alpha+i\beta,\alpha-i\beta)$, where $\rho,\beta\geq0$ and $\alpha$ is real, and let $\Delta:=(a_1,a_2,a_3)$, where $a_1\geq a_2\geq a_3\geq0$. Then $\sigma$ is the spectrum of a nonnegative matrix with diagonal elements $\Delta$ if and only if the following conditions hold:
	\begin{enumerate}
		\item[\textup{(i)}] $a_1\leq\rho$;
		\item[\textup{(ii)}] $s_1(\Delta)=s_1(\sigma)$;
		\item[\textup{(iii)}] $s_2(\Delta)\leq s_2(\sigma)$.
	\end{enumerate}
	
	Furthermore, if (i)--(iii) are satisfied, then $\sigma$ may be realised by a nonnegative matrix of the form (\ref{eq:AMatrix}).
\end{proposition}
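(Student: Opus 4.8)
The plan is to follow the proof of Proposition~\ref{prop:n=3RealDiagonalElements} essentially verbatim, the key simplification being that the quadratic factor of the characteristic polynomial arising from the complex pair is automatically nonnegative (whereas $(a_1-\lambda_2)(a_1-\lambda_3)$ can change sign in the real case, which is why a two-sided bound on $a_1$ appears there). For necessity: conditions (ii) and (iii) are immediate from Observation~\ref{obv:DiagonalElementsNecessity}; for (i), I would suppose $A\geq 0$ has spectrum $\sigma$ and diagonal $\Delta$ and substitute $x=a_1$ into the general $3\times 3$ characteristic polynomial~(\ref{eq:3x3CharPol}). The right-hand side is then a sum of products of nonnegative quantities (using $a_1\geq a_2$ and $a_1\geq a_3$), hence $\leq 0$, while the left-hand side equals $(a_1-\rho)\left((a_1-\alpha)^2+\beta^2\right)$; since $\beta>0$ the second factor is strictly positive, forcing $a_1\leq\rho$. (Equivalently: the spectral radius of a nonnegative matrix dominates its diagonal entries, and when $\beta>0$ the spectral radius must be the unique real eigenvalue $\rho$.)

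For sufficiency, assume (i)--(iii) and take $A$ to be the matrix in~(\ref{eq:3x3AMatrix}) with $b_2:=\frac{1}{2}\left(s_2(\sigma)-s_2(\Delta)\right)$ and $b_3:=(\rho-a_1)\left((a_1-\alpha)^2+\beta^2\right)$. Then $b_2\geq 0$ by (iii) and $b_3\geq 0$ by (i), so $A$ is nonnegative, and it remains only to verify its spectrum. By Lemma~\ref{lem:ACharPol}, the characteristic polynomial of $A$ is $g(x)=(x-a_1)(x-a_2)(x-a_3)-b_2(x-a_1)-b_3$, and I would show $g(x)=h(x):=(x-\rho)\left((x-\alpha)^2+\beta^2\right)$. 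Both are monic cubics; the coefficient of $x^2$ in $g$ is $-s_1(\Delta)=-s_1(\sigma)$ by (ii); the coefficient of $x$ in $g$ is $\frac{1}{2}\left(s_1(\Delta)^2-s_2(\Delta)\right)-b_2$, which by (ii) and the choice of $b_2$ reduces to $\frac{1}{2}\left(s_1(\sigma)^2-s_2(\sigma)\right)$, the coefficient of $x$ in $h$; and $g(a_1)=-b_3=(a_1-\rho)\left((a_1-\alpha)^2+\beta^2\right)=h(a_1)$. Since two monic cubics that agree in the coefficients of $x^2$ and $x$ and at one point must coincide, $g=h$ and $A$ realises $\sigma$.

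I do not expect a real obstacle here: the value of $b_2$ is dictated by~(\ref{eq:b2}) (equivalently, by Newton's identities), exactly as in Proposition~\ref{prop:n=3RealDiagonalElements}, and everything else is the evaluation of a cubic at a single point. The one place needing a little care is the boundary case $\beta=0$, where $\sigma$ is real and the bound $a_1\leq\rho$ should be replaced by the two-sided bound of Proposition~\ref{prop:n=3RealDiagonalElements} (after reordering $\sigma$ so that the Perron element comes first); I would either assume $\beta>0$ throughout or dispose of $\beta=0$ by invoking that proposition at the outset.
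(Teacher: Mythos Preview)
Your proposal is correct and follows essentially the same route as the paper. The only difference is cosmetic: for the necessity of (i) the paper simply invokes the fact that the Perron eigenvalue of a nonnegative matrix dominates every entry (which you mention parenthetically), rather than substituting $x=a_1$ into~(\ref{eq:3x3CharPol}); and for sufficiency the paper phrases things as ``suppose $A$ in~(\ref{eq:3x3AMatrix}) has spectrum $\sigma$'' and then reads off $b_2,b_3$, whereas you define $b_2,b_3$ first and then verify the spectrum---these are the same computation in opposite directions. Your caution about the boundary case $\beta=0$ is well placed (the paper does not comment on it), and deferring to Proposition~\ref{prop:n=3RealDiagonalElements} there is the right move.
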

\begin{proof}
	Suppose there exists a nonnegative matrix $A$ with spectrum $\sigma$ and diagonal elements $\Delta$. It is well known that the Perron eigenvalue of a nonnegative matrix must be at least as large as each of its entries. Hence (i) holds. The necessity of (ii) and (iii) was shown in Observation \ref{obv:DiagonalElementsNecessity}.
	
	Now suppose (i)--(iii) hold and the matrix $A$ given in (\ref{eq:3x3AMatrix}) has spectrum $\sigma$. As in the proof of Proposition \ref{prop:n=3RealDiagonalElements}, we note that $b_2\geq0$ by (\ref{eq:b2}) and
	\[
		b_3=-(a_1-\rho)\left( \left(a_1-\alpha\right){}^2+\beta ^2 \right)\geq0.\qedhere
	\]
\end{proof}

For larger $n$, we will require a systematic way to compute the $b_i$. This, we address in the following section.

\section{Computing the $b_i$}\label{sec:biComputation}

The observations in this section will apply to general (not necessarily nonnegative) matrices of the form (\ref{eq:AMatrix}).

Given $a_1,a_2,\ldots,a_n$ and an arbitrary polynomial
\begin{equation}\label{eq:ArbitraryPolynomial}
	p(x):=x^n-s_1(\Delta)x^{n-1}+q_2x^{n-2}+q_3x^{n-3}+\cdots+q_n,
\end{equation}
whose roots sum to $s_1(\Delta)$, there exists a unique solution $(b_2,b_3,\ldots,$ $b_n)$ such that the polynomials (\ref{eq:ACharPol}) and (\ref{eq:ArbitraryPolynomial}) coincide. The aim of this section is to find this solution in closed form.

The $k$-th elementary symmetric function of the variables $x_1,x_2,\ldots,x_n$ is defined by
\begin{align*}
	e_0(x_1,x_2,\ldots,x_n) &:= 1, \\
	e_k(x_1,x_2,\ldots,x_n) &:= \sum_{1\leq i_1<i_2<\cdots<i_k\leq n} x_{i_1}x_{i_2}\cdots x_{i_k} :\hspace{3mm} k=1,2,\ldots,n
\end{align*}
and it will also be convenient to define $e_k(x_1,x_2,\ldots,x_n)=0$ if $k<0$ or $k>n$.

Let us compare the coefficients of (\ref{eq:ACharPol}) and (\ref{eq:ArbitraryPolynomial}).  Both polynomials are monic and the coefficient of $x^{n-1}$ in both polynomials is $-s_1(\Delta)$. Comparing the coefficients of $x^{n-2}$ yields
\[
	b_2=-q_2+e_2(a_1,a_2,\ldots,a_n).
\]
Comparing the coefficients of $x^{n-3}$ gives
\[
	b_3=-q_3-e_3(a_1,a_2,\ldots,a_n)+e_1(a_1,a_2,\ldots,a_{n-2})b_2.
\]
In general, to compute the coefficient of $x^{n-k}$ in (\ref{eq:ACharPol}), we notice that the coefficient of $x^{n-k}$ in $\prod_{i=1}^n(x-a_i)$ is $(-1)^ke_k(\Delta)$, and, for $j=2,3,\ldots,k-1$, the coefficient of $x^{n-k}$ in $b_j\prod_{i=1}^{n-j}(x-a_i)$ is given by
\[
	(-1)^{k-j}e_{k-j}(\Delta^{(j)})b_j,
\]
where
\[
	\Delta^{(j)}:=(a_1,a_2,\ldots,a_j)  \hspace{2mm} : \hspace{6mm} j=1,2,\ldots,n.
\]
Hence, equating the coefficients of $x^{n-k}$ in (\ref{eq:ACharPol}) and (\ref{eq:ArbitraryPolynomial}) gives
\begin{equation}\label{eq:ExampleRecurrence}
	b_k=-q_k+(-1)^ke_k(\Delta)-\sum_{j=2}^{k-1}(-1)^{k-j}e_{k-j}(\Delta^{(n-j)})b_j.
\end{equation}
Using (\ref{eq:ExampleRecurrence}), it is possible to calculate the $b_i$ (recursively) by considering the elementary symmetric functions of the truncated lists $\Delta^{(j)}$.

For example, suppose we wish the matrix $A$, given in (\ref{eq:AMatrix}), to have spectrum $\sigma:=(3,2i,-2i,-1)$. Clearly, we must choose $(a_1,a_2,a_3,a_4)$ such that $a_1+a_2+a_3+a_4=2$. By equating the polynomials
\[
	f(x)=(x-a_1)(x-a_2)(x-a_3)(x-a_4)-b_2(x-a_1)(x-a_2)-b_3(x-a_1)-b_4
\]
and
\[
	p(x)=(x - 3) (x^2 + 4) (x + 1)
\]
and solving the recurrence relation (\ref{eq:ExampleRecurrence}), we see that we must set
\begin{align*}
	b_2&=-1+e_2(a_1,a_2,a_3,a_4)\\
	&= -1+a_1 a_2+a_1 a_3+a_2 a_3+a_1 a_4+a_2 a_4+a_3 a_4,\\
	b_3&=8-e_3(a_1,a_2,a_3,a_4)+e_1(a_1,a_2)b_2\\
	&= 8-a_1-a_2+a_1^2 a_2+a_1 a_2^2+a_1^2 a_3+a_1 a_2 a_3+a_2^2 a_3+a_1^2 a_4+a_1 a_2 a_4+a_2^2 a_4,\\
	b_4&=12+e_4(a_1,a_2,a_3,a_4)-e_2(a_1,a_2)b_2+e_1(a_1)b_3\\
	&= 12+8 a_1-a_1^2+a_1^3 a_2+a_1^3 a_3+a_1^3 a_4.
\end{align*}

As before, we note that (under the assumption $a_1+a_2+a_3+a_4=2$) the condition $b_2\geq0$ is equivalent to $a_1^2+a_2^2+a_3^2+a_4^2\leq s_2(\sigma)=2$. Hence, if Theorem \ref{thm:DiagonalElements} is to be believed, any choice of $(a_1,a_2,a_3,a_4)$ satisfying $b_2\geq0$ must also satisfy $b_3,b_4\geq0$. This example illustrates the difficulty in proving Theorem \ref{thm:DiagonalElements}.

To find a closed-form solution to (\ref{eq:ExampleRecurrence}), we will need to consider the $k$-th \emph{complete homogeneous symmetric functions}
\begin{align*}
	h_0(x_1,x_2,\ldots,x_n) &:= 1, \\
	h_k(x_1,x_2,\ldots,x_n) &:= \sum_{1\leq i_1\leq\cdots\leq i_k\leq n} x_{i_1}x_{i_2}\cdots x_{i_k} :\hspace{2mm} k=1,2,\ldots
\end{align*}
Again, it is convenient to define $h_k(x_1,x_2,\ldots,x_n)=0$ if $k<0$. 

The following relation between the elementary symmetric functions and the complete homogeneous ones is well known: 

\begin{lemma}\label{lem:ehRelation}
	For all $m>0$,
	\begin{equation}\label{eq:ehRelation}
		\sum_{i=0}^m(-1)^ie_i(x_1,x_2,\ldots,x_n)h_{m-i}(x_1,x_2,\ldots,x_n)=0.
	\end{equation}
\end{lemma}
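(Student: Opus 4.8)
The statement to prove is Lemma~\ref{lem:ehRelation}, the classical Newton-type identity relating elementary and complete homogeneous symmetric functions. This is a standard result, so the proof should be short and essentially generating-function-based.

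The plan is to work with the generating functions in a formal variable $t$. Recall the two standard product/quotient formulas
\[
	E(t):=\sum_{i\geq0}e_i(x_1,\ldots,x_n)t^i=\prod_{j=1}^n(1+x_jt)
\]
and
\[
	H(t):=\sum_{k\geq0}h_k(x_1,\ldots,x_n)t^k=\prod_{j=1}^n\frac{1}{1-x_jt}.
\]
The first is immediate from expanding the product and collecting terms of each degree; the second follows by expanding each factor $1/(1-x_jt)=\sum_{r\geq0}x_j^rt^r$ as a geometric series and observing that the coefficient of $t^k$ in the product is exactly the sum of all monomials of degree $k$, i.e.\ $h_k$. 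The key observation is then that
\[
	E(-t)H(t)=\prod_{j=1}^n(1-x_jt)\prod_{j=1}^n\frac{1}{1-x_jt}=1.
\]
Comparing the coefficient of $t^m$ on both sides for $m>0$ gives $0$ on the right and $\sum_{i=0}^m(-1)^ie_ih_{m-i}$ on the left (noting that $e_i=0$ for $i>n$ so the sum is effectively finite, and that the first formula contributes $(-1)^i e_i$ to the coefficient of $t^i$ in $E(-t)$), which is precisely~\eqref{eq:ehRelation}.

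An alternative, purely algebraic route avoids generating functions: one can prove it by induction on $m$, using the recurrences $e_i(x_1,\ldots,x_n)=e_i(x_1,\ldots,x_{n-1})+x_ne_{i-1}(x_1,\ldots,x_{n-1})$ and the analogous one for $h_k$, together with a second induction on the number of variables $n$. I would present the generating-function argument as the main proof since it is the cleanest, and perhaps remark that an induction is possible. There is essentially no obstacle here: the only mild subtlety is the bookkeeping that the sum in~\eqref{eq:ehRelation} may be taken up to $m$ rather than $\min(m,n)$ because $e_i$ vanishes for $i>n$ by our convention, and that all the series involved are formal, so no convergence issues arise. Since the paper already states the lemma as ``well known,'' I would keep the proof to a few lines.
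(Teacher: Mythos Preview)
Your generating-function argument is correct and is the standard proof of this identity. Note, however, that the paper does not actually prove Lemma~\ref{lem:ehRelation}: it simply states the relation as ``well known'' and moves on, so there is no proof in the paper to compare against. Your write-up would serve perfectly well if a proof were desired; the brief remark about the convention $e_i=0$ for $i>n$ is exactly the right point to flag.
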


We will need a generalisation of Lemma \ref{lem:ehRelation} to truncated lists. Given $a_1,a_2,\ldots,a_n$, let us formally define
\[
	\Delta^{(j)}:=\left\{
		\begin{array}{lcl}
			(a_1,a_2,\ldots,a_j) & : & j=1,2,\ldots,n,\\
			(a_1,a_2,\ldots,a_n,\underbrace{0,0,\ldots,0}_{j-n \text{ zeros}}) & : & j=n+1,n+2,\ldots
		\end{array}
	\right.
\]
To avoid unnecessarily long expressions, for $j=1,2,\ldots$, let us denote
\begin{align*}
	\epsilon_i^{(j)}&:=e_i\left(\Delta^{(j)}\right),\\
	\eta_i^{(j)}&:=h_i\left(\Delta^{(j)}\right)
\end{align*}
and let
\[
	\epsilon_i^{(0)}=\eta_i^{(0)}=\left\{
		\begin{array}{lcl}
			1 & : & i=0,\\
			0 & : & i\neq0.
		\end{array}
	\right.
\]
It will also be convenient to define
\[
	{\epsilon_i'}^{(j)}:=e_i(a_{j+1},a_{j+2},\ldots,a_n):\hspace{3mm} i=0,\ldots,n-j;\hspace{2mm} j=0,\ldots,n-1.
\]

\begin{observation}\label{obv:TruncatedListFacts}\
	\begin{enumerate}
		\item If $j\geq n$, then $\epsilon_i^{(j)}=\epsilon_i^{(n)}$ and $\eta_i^{(j)}=\eta_i^{(n)}$.
		\item For all $j,k\in\mathbb{N}$,
			\[
				\epsilon_j^{(k)}=\epsilon_j^{(k-1)}+a_k\epsilon_{j-1}^{(k-1)}
			\]
			and
			\[
				\eta_j^{(k)}=\sum_{r=0}^ja_k^r\eta_{j-r}^{(k-1)}.
			\]
	\end{enumerate}
\end{observation}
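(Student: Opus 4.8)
The plan is to prove both parts directly from the definitions of $\epsilon_i^{(j)}$ and $\eta_i^{(j)}$ as the elementary and complete homogeneous symmetric functions of the truncated list $\Delta^{(j)}$, keeping in mind the convention that $\Delta^{(j)}$ is padded with zeros once $j$ exceeds $n$. For part (1), the key point is that appending a zero variable to a list changes neither its elementary symmetric functions nor its complete homogeneous symmetric functions: in $e_i$, any monomial containing the new variable vanishes, so $e_i(x_1,\ldots,x_n,0)=e_i(x_1,\ldots,x_n)$, and likewise any monomial in $h_i$ that uses the zero variable vanishes, so $h_i(x_1,\ldots,x_n,0)=h_i(x_1,\ldots,x_n)$. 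Iterating this observation $j-n$ times gives $\epsilon_i^{(j)}=\epsilon_i^{(n)}$ and $\eta_i^{(j)}=\eta_i^{(n)}$ for all $j\geq n$.

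For part (2), I would argue by splitting monomials according to whether or not they involve the variable $a_k$ (the last entry of $\Delta^{(k)}$). For the elementary symmetric function, a degree-$j$ squarefree monomial in $a_1,\ldots,a_k$ either omits $a_k$ entirely, contributing the sum $\epsilon_j^{(k-1)}$, or contains $a_k$ exactly once, contributing $a_k$ times a degree-$(j-1)$ squarefree monomial in $a_1,\ldots,a_{k-1}$, i.e. $a_k\,\epsilon_{j-1}^{(k-1)}$; adding these gives the stated recurrence, which is valid for all $j,k\in\mathbb{N}$ once one checks the boundary cases ($j=0$, or $k$ small) against the convention $\epsilon_i^{(0)}=\delta_{i,0}$. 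For the complete homogeneous function, I would group the degree-$j$ monomials in $a_1,\ldots,a_k$ by the multiplicity $r$ with which $a_k$ appears, $r=0,1,\ldots,j$; the monomials with $a_k$ appearing exactly $r$ times are precisely $a_k^r$ times the degree-$(j-r)$ monomials in $a_1,\ldots,a_{k-1}$, giving $a_k^r\,\eta_{j-r}^{(k-1)}$, and summing over $r$ yields $\eta_j^{(k)}=\sum_{r=0}^j a_k^r\,\eta_{j-r}^{(k-1)}$.

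Neither part presents a genuine obstacle — these are standard structural identities for symmetric functions — so the only care required is bookkeeping at the boundaries and making the zero-padding convention explicit, which is exactly why the statement is phrased as an observation rather than a lemma. If one prefers a generating-function proof of part (2), the elementary recurrence is the coefficient identity coming from $\prod_{i=1}^k(1+a_i t)=\bigl(\prod_{i=1}^{k-1}(1+a_i t)\bigr)(1+a_k t)$, and the homogeneous recurrence comes from $\prod_{i=1}^k(1-a_i t)^{-1}=\bigl(\prod_{i=1}^{k-1}(1-a_i t)^{-1}\bigr)(1-a_k t)^{-1}$, expanding the last factor as a geometric series; I would include whichever of the two presentations reads more cleanly alongside the rest of Section \ref{sec:biComputation}.
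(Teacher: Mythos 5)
Your proof is correct, and it is exactly the standard justification that the paper leaves implicit: the Observation is stated without proof, the intended argument being precisely yours --- zero variables contribute nothing to $e_i$ or $h_i$ (part 1), and splitting monomials by the multiplicity of $a_k$ gives the two recurrences (part 2). No gaps; the boundary conventions $\epsilon_i^{(0)}=\eta_i^{(0)}=\delta_{i,0}$ and $e_i=0$, $h_i=0$ for $i<0$ handle the edge cases just as you indicate.
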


With the above notation, we give a generalisation of Lemma \ref{lem:ehRelation}, featuring truncated lists:

\begin{lemma}\label{lem:STRelation}
	If $k\geq0$ and $m>0$, then
	\begin{equation}\label{eq:STRelation}
		\sum_{i=0}^m(-1)^{i}\epsilon_i^{(k+i)}\eta_{m-i}^{(k+i+1)}=0.
	\end{equation}
\end{lemma}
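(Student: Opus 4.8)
The plan is to prove the identity \eqref{eq:STRelation} by induction, with the base case $k=0$ recovered essentially from the classical Lemma \ref{lem:ehRelation}. First I would observe that the exponents on the superscripts increase with $i$ in a way that makes a direct appeal to Lemma \ref{lem:ehRelation} impossible, so the argument must exploit the recurrences in Observation \ref{obv:TruncatedListFacts}(2). The natural strategy is induction on $k$: assuming \eqref{eq:STRelation} holds for some $k\geq0$ (for all $m>0$), I would try to deduce it for $k+1$. To do this, I would take the sum $\sum_{i=0}^m(-1)^i\epsilon_i^{(k+1+i)}\eta_{m-i}^{(k+1+i+1)}$ and expand $\epsilon_i^{(k+1+i)}=\epsilon_i^{(k+i)}+a_{k+1+i}\epsilon_{i-1}^{(k+i)}$ using the first recurrence, splitting the sum into two pieces and reindexing the second. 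The aim is to rearrange the result into a combination of instances of \eqref{eq:STRelation} at level $k$ (possibly for several values of $m$) plus telescoping boundary terms.

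An alternative, and possibly cleaner, route is to prove \eqref{eq:STRelation} directly via a generating-function argument. The key is the identity $\sum_{i\geq0}(-1)^i e_i(x_1,\ldots,x_n)t^i=\prod_{j=1}^n(1-x_jt)$ and $\sum_{i\geq0}h_i(x_1,\ldots,x_n)t^i=\prod_{j=1}^n(1-x_jt)^{-1}$, whose product is $1$ — this is exactly Lemma \ref{lem:ehRelation}. For the truncated version, I would consider the partial products $P_j(t):=\prod_{r=1}^j(1-a_rt)$ and $Q_j(t):=\prod_{r=1}^j(1-a_rt)^{-1}$, so that $\epsilon_i^{(j)}=[t^i](-1)^i{}^{-1}\cdots$ — more precisely $\sum_i(-1)^i\epsilon_i^{(j)}t^i=P_j(t)$ and $\sum_i\eta_i^{(j)}t^i=Q_j(t)$. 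The left side of \eqref{eq:STRelation} with the shifting superscripts is not literally the coefficient of $t^m$ in a single product, but the $i$-dependence of the superscripts (each raised by $i$, with an extra $+1$ on the $\eta$) suggests introducing an auxiliary variable or doing the telescoping $P_{k+i}Q_{k+i+1}=P_{k+i}Q_{k+i}\cdot(1-a_{k+i+1}t)^{-1}$ and exploiting $P_jQ_j=1$. I expect that writing $\epsilon_i^{(k+i)}$ against $\eta_{m-i}^{(k+i+1)}$ and using $\eta^{(k+i+1)}_{m-i}=\sum_{r\ge0}a_{k+i+1}^r\eta^{(k+i)}_{m-i-r}$ (the second recurrence) will, after summing over $i$, produce a cascade that collapses.

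The main obstacle is bookkeeping the index shifts: because the superscript on $\epsilon$ depends on the summation index $i$, neither recurrence applies uniformly, and one must carefully track which partial list each symmetric function refers to as $i$ varies. I would handle this by setting up the induction so that each step peels off exactly one variable $a_{k+1}$ (or $a_{k+i+1}$) and reduces to the previous case; the delicate point is verifying that the boundary contributions (the $i=0$ and $i=m$ terms, and the terms that fall outside the valid range of subscripts where $e$ or $h$ vanishes) cancel correctly. I would also keep in mind Observation \ref{obv:TruncatedListFacts}(1), which guarantees the truncated symmetric functions stabilise once $j\geq n$, so there are no issues for large $k$ and the identity is genuinely a finite statement for each fixed $m$.
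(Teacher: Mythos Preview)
Your induction is set up on the wrong variable, and this causes both the base case and the inductive step to fail.

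\textbf{Base case.} For $k=0$ the sum is
\[
\sum_{i=0}^m(-1)^i\epsilon_i^{(i)}\eta_{m-i}^{(i+1)}
=\sum_{i=0}^m(-1)^i\,a_1a_2\cdots a_i\;h_{m-i}(a_1,\ldots,a_{i+1}),
\]
where each term involves a genuinely different truncated list. This is \emph{not} an instance of Lemma~\ref{lem:ehRelation}; in fact $k=0$ is the case in which the superscripts vary most wildly, so it is the hardest case, not the easiest. Your claim that it is ``recovered essentially'' from the classical identity is unsupported.

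\textbf{Inductive step.} Expanding $\epsilon_i^{(k+1+i)}=\epsilon_i^{(k+i)}+a_{k+1+i}\,\epsilon_{i-1}^{(k+i)}$ introduces the variable $a_{k+1+i}$, which depends on the summation index $i$. After reindexing you obtain sums whose $\eta$-superscripts are $k+2+i$ (not $k+1+i$), multiplied by $i$-dependent coefficients $a_{k+2+i}$. These are not instances of the identity at level $k$, and the ``cascade'' you hope for does not collapse: you are peeling off a different variable from each term, so nothing telescopes. The same obstruction blocks the generating-function sketch, which you yourself note does not produce the left side as a single coefficient.

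\textbf{What actually works.} The paper inducts on $n$, the number of variables. Since \eqref{eq:STRelation} is independent of $a_{k+m+2},a_{k+m+3},\ldots$, one may assume $n\le k+m+1$. The base case is $n\le k+1$: then every superscript $k+i$ and $k+i+1$ is at least $n$, so by Observation~\ref{obv:TruncatedListFacts}(1) all the lists stabilise to $\Delta^{(n)}$ and the sum \emph{is} literally Lemma~\ref{lem:ehRelation}. For the inductive step one writes $n=k+l$ with $2\le l\le m+1$, splits the sum at $i=l-1$, and applies the recurrences of Observation~\ref{obv:TruncatedListFacts}(2) only to the terms that actually contain $a_n$; because the \emph{same} variable $a_n$ is being removed from every affected term, the expression, viewed as a polynomial in $a_n$, has each coefficient vanishing by the inductive hypothesis. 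You noticed the stabilisation for large superscripts in your final paragraph---that is precisely where the base case should live, not a technical aside.
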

\begin{proof}
	We first note that (\ref{eq:STRelation}) is independent of $a_{k+m+2},a_{k+m+3},\ldots$, and hence, we need only consider $n\leq k+m+1$. The proof is by induction on $n$.
	
	As a base of induction, we note that if $n\leq k+1$, then, by Observation \ref{obv:TruncatedListFacts}, the left-hand side of (\ref{eq:STRelation}) reduces to
	\[
		\sum_{i=0}^m(-1)^i\epsilon_i^{(n)}\eta_i^{(n)},
	\]
	which, by Lemma \ref{lem:ehRelation}, equals zero.
	
	Now suppose $n=k+l$, where $2\leq l\leq m$ and assume the statement holds for lists of length $n-1$. Note that, by the inductive hypothesis and Observation \ref{obv:TruncatedListFacts}, Item 1,
	\begin{equation}\label{eq:STExpansionInduction}
		\sum_{i=0}^{l-2}(-1)^i\epsilon_i^{(n-l+i)}\eta_{m-i}^{(n-l+i+1)}+\sum_{i=l-1}^m(-1)^i\epsilon_i^{(n-1)}\eta_{m-i}^{(n-1)}=0.
	\end{equation}
	
	The left-hand side of (\ref{eq:STRelation}) may be written as
	\begin{multline*}
		\sum_{i=0}^{l-2}(-1)^i\epsilon_i^{(n-l+i)}\eta_{m-i}^{(n-l+i+1)}\\+(-1)^{l-1}\epsilon_{l-1}^{(n-1)}\eta_{m-l+1}^{(n)}+\sum_{i=l}^m(-1)^i\epsilon_i^{(n)}\eta_{m-i}^{(n)},
	\end{multline*}
	which, by Observation \ref{obv:TruncatedListFacts}, Item 2, may be written in the form
	\begin{multline}\label{eq:STExpansion}
		\sum_{i=0}^{l-2}(-1)^i\epsilon_i^{(n-l+i)}\eta_{m-i}^{(n-l+i+1)}+(-1)^{l-1}\epsilon_{l-1}^{(n-1)}\sum_{r=0}^{m-l+1}a_n^r\eta_{m-l-r+1}^{(n-1)} \\ +\sum_{i=l}^m(-1)^i\left( \epsilon_i^{(n-1)}+a_n\epsilon_{i-1}^{(n-1)} \right)\sum_{r=0}^{m-i}a_n^r\eta_{m-i-r}^{(n-1)}.
	\end{multline}
	
	Let us now consider (\ref{eq:STExpansion}) as a polynomial in $a_n$. The constant term is equal to (\ref{eq:STExpansionInduction}), which, as previously noted, equals zero, and for $s=1,2,\ldots$, the coefficient of $a_n^s$ is given by
	\begin{align*}
		\sum_{i=l-1}^m(-&1)^i\epsilon_i^{(n-1)}\eta_{m-i-s}^{(n-1)}+\sum_{i=l}^m(-1)^i\epsilon_{i-1}^{(n-1)}\eta_{m-i-s+1}^{(n-1)}\\
		&=\sum_{i=l-1}^m(-1)^i\epsilon_i^{(n-1)}\eta_{m-i-s}^{(n-1)}-\sum_{i=l-1}^{m-1}(-1)^i\epsilon_i^{(n-1)}\eta_{m-i-s}^{(n-1)}\\
		&=(-1)^m\epsilon_m^{(n-1)}\eta_{-s}^{(n-1)}\\
		&=0.
	\end{align*}
	Hence (\ref{eq:STExpansion}) equals zero, as required.
	
	Finally, if $n=k+m+1$, then the left-hand side of (\ref{eq:STRelation}) may be written as
	\begin{equation}\label{eq:n=k+m+1}
		\sum_{i=0}^{m-1}(-1)^i\epsilon_i^{(n-m+i-1)}\eta_{m-i}^{(n-m+i)}+(-1)^m\epsilon_m^{(n-1)}.
	\end{equation}
	Since (\ref{eq:n=k+m+1}) is independent of $a_n$, it must vanish by the inductive hypothesis.
\end{proof}

\begin{remark}
	Clearly, Lemma \ref{lem:STRelation} holds for arbitrary (not necessarily real) variables $a_1,a_2,\ldots,a_n$.
\end{remark}

We are now able to give a closed form solution for each $b_i$:

\begin{lemma}\label{lem:bjSol}
	Let
	\[
		g(x):=x^{n-1}+c_1x^{n-2}+c_2x^{n-3}+\cdots+c_{n-1}
	\]
	be an arbitrary polynomial and let
	\begin{equation}\label{eq:ValueforRho'}
		\rho:=c_1+\epsilon_1^{(n)}.
	\end{equation}
	Then the matrix $A$ given in (\ref{eq:AMatrix}) has characteristic polynomial $(x-\rho)g(x)$ if and only if
	\begin{equation}\label{eq:bjSol}
		b_j=\sum_{i=0}^j\mathscr{K}_{i,j} \hspace{2mm} : \hspace{6mm} j=2,3,\ldots,n,
	\end{equation}
	where
	\begin{equation}\label{eq:Kij}
		\mathscr{K}_{i,j}:=\eta_i^{(n-j+1)}(c_1c_{j-i-1}-c_{j-i})+\epsilon_1^{(n)}\eta_{i-1}^{(n-j+1)}c_{j-i}.
	\end{equation}
\end{lemma}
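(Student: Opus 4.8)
The plan is to pass to reciprocal polynomials --- equivalently, formal power series in $t=1/x$ --- where the truncated symmetric functions $\epsilon_i^{(m)}$ and $\eta_i^{(m)}$ become generating-function coefficients, and then reduce the whole identity to Lemma~\ref{lem:STRelation}. Denote by $[t^{k}]F(t)$ the coefficient of $t^{k}$ in $F$, and for a polynomial $p(x)=\sum_{k=0}^{n}p_k x^{n-k}$ set $p^{*}(t):=t^{n}p(1/t)=\sum_{k=0}^{n}p_k t^{k}$ (the map $p\mapsto p^{*}$ is linear and, on monic degree-$n$ polynomials, injective). Put
\[
	E^{(m)}(t):=\prod_{i=1}^{m}(1-a_i t)=\sum_{k\ge0}(-1)^{k}\epsilon_k^{(m)}t^{k},\qquad H^{(m)}(t):=E^{(m)}(t)^{-1}=\sum_{k\ge0}\eta_k^{(m)}t^{k}.
\]
A direct check gives $\bigl(\prod_{i=1}^{n-j}(x-a_i)\bigr)^{*}=t^{j}E^{(n-j)}(t)$ and, writing $\tilde g(t):=\sum_{k=0}^{n-1}c_k t^{k}$ with $c_0:=1$, also $\bigl((x-\rho)g(x)\bigr)^{*}=(1-\rho t)\tilde g(t)=:\Phi(t)$, where one uses $\rho=c_1+\epsilon_1^{(n)}$ from (\ref{eq:ValueforRho'}). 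Hence, by Lemma~\ref{lem:ACharPol}, the matrix $A$ has characteristic polynomial $(x-\rho)g(x)$ if and only if
\[
	E^{(n)}(t)-\Phi(t)=\sum_{j=2}^{n}b_j\,t^{j}E^{(n-j)}(t).
\]
Since the polynomials $t^{j}E^{(n-j)}(t)$, $j=0,1,\dots,n$, have distinct lowest-order terms $t^{j}$, this linear system has a unique solution $(b_2,\dots,b_n)$; so it suffices to verify that the $b_j$ of (\ref{eq:bjSol})--(\ref{eq:Kij}) satisfy it, the ``only if'' part of the lemma then following for free.

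Next I would recast the claimed formula in a more transparent shape. Grouping the terms of $\sum_{i=0}^{j}\mathscr{K}_{i,j}$ according to the coefficient $c_r$ each multiplies and using $\rho=c_1+\epsilon_1^{(n)}$, one obtains
\[
	\sum_{i=0}^{j}\mathscr{K}_{i,j}=\sum_{r\ge0}c_r\bigl(\rho\,\eta_{j-1-r}^{(n-j+1)}-\eta_{j-r}^{(n-j+1)}\bigr)=-\sum_{r=0}^{j}\phi_r\,\eta_{j-r}^{(n-j+1)}=-[t^{j}]\bigl(\Phi(t)H^{(n-j+1)}(t)\bigr),
\]
where $\phi_r:=[t^{r}]\Phi(t)=c_r-\rho c_{r-1}$ (conventions $c_{-1}:=0$, $c_r:=0$ for $r\ge n$). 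This expression equals $-1$ at $j=0$ and $0$ at $j=1$ (the latter because $\phi_1=c_1-\rho=-\epsilon_1^{(n)}$), so it is convenient to \emph{define} $b_0$ and $b_1$ by the same expression; the missing low-order terms can then be absorbed into the sums that follow.

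Finally I would verify the displayed power-series identity coefficient by coefficient. Extracting $[t^{s}]$ from both sides, the $s=0$ and $s=1$ coefficients agree thanks to the value of $\rho$, and for $2\le s\le n$ the required equality is
\[
	\sum_{j=2}^{s}(-1)^{s-j}\epsilon_{s-j}^{(n-j)}b_j=(-1)^{s}\epsilon_s^{(n)}-\phi_s,
\]
which is exactly the recurrence (\ref{eq:ExampleRecurrence}) rewritten. Adjoining the terms $j=0,1$ changes the left-hand side by $-(-1)^{s}\epsilon_s^{(n)}$, so it suffices to show $\sum_{j=0}^{s}(-1)^{s-j}\epsilon_{s-j}^{(n-j)}b_j=-\phi_s$. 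Substituting $b_j=-\sum_{r=0}^{j}\phi_r\eta_{j-r}^{(n-j+1)}$, interchanging the order of summation so that $r$ runs outermost, and putting $j=s-i$ in the inner sum, the left-hand side becomes $-\sum_{r=0}^{s}\phi_r T_r$, where
\[
	T_r:=\sum_{i=0}^{s-r}(-1)^{i}\,\epsilon_i^{(n-s+i)}\,\eta_{s-r-i}^{(n-s+i+1)}.
\]
For $r<s$ this is precisely the left-hand side of (\ref{eq:STRelation}) with $k=n-s\ge0$ and $m=s-r>0$, hence $T_r=0$, while $T_s=1$ trivially. Therefore $\sum_{r=0}^{s}\phi_r T_r=\phi_s$ and the identity follows.

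The one genuinely delicate step is the index bookkeeping at the end: one must confirm that, after interchanging the summations and substituting $j=s-i$, every truncated-list superscript collapses into the pattern $(n-s+i)$, $(n-s+i+1)$ governed by the single parameter $k=n-s$, so that Lemma~\ref{lem:STRelation} applies verbatim. The reindexing in the second paragraph needs comparable care with the boundary conventions $\eta_{-1}=0$, $c_{-1}=0$ and $c_r=0$ for $r\ge n$; everything else is routine. An essentially equivalent, generating-function-free route would establish (\ref{eq:bjSol})--(\ref{eq:Kij}) by induction on $j$ directly from the recurrence (\ref{eq:ExampleRecurrence}), grouping the resulting terms by the $c_r$ they multiply and annihilating each group with Lemma~\ref{lem:STRelation}; the combinatorial content is identical.
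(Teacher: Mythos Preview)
Your argument is correct and, at its core, the same as the paper's: both proofs note that the $b_j$ are uniquely determined by a triangular recurrence, substitute the claimed expression, and collapse the resulting double sum using Lemma~\ref{lem:STRelation} with $k=n-s$. The difference is purely organizational. The paper works directly with the $\mathscr{K}_{i,j}$ and groups the verification by the factors $(c_1c_{s-1}-c_s)$ and $\epsilon_1^{(n)}c_s$; you instead pass to reciprocal power series, rewrite the claimed solution compactly as $b_j=-[t^{j}]\bigl(\Phi(t)H^{(n-j+1)}(t)\bigr)$, and group by $\phi_r=c_r-\rho c_{r-1}$. Since $\rho=c_1+\epsilon_1^{(n)}$, these are different linear combinations of the \emph{same} vanishing inner sums, so the combinatorial content is identical---as you yourself observe in your final sentence. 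Your packaging is arguably cleaner (the single formula $b_j=-[t^{j}](\Phi H^{(n-j+1)})$ makes the structure transparent and the extension to $b_0=-1$, $b_1=0$ automatic), but nothing new is being proved.
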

\begin{proof}
	Equating the coefficients of $x^{n-j}$ in $(x-\rho)g(x)$ and (\ref{eq:ACharPol}) gives
	\begin{equation}\label{eq:bjRecurrence}
		c_j-\rho c_{j-1}=-\sum _{r=0}^j (-1)^r\epsilon_r^{(n-j+r)}b_{j-r}.
	\end{equation}
	Hence, in order for the polynomial in (\ref{eq:ACharPol}) to be equal to $(x-\rho)g(x)$, $(b_2,b_3,$ $\ldots,b_n)$ must be the unique solution to the recurrence relation (\ref{eq:bjRecurrence}) with initial condition $b_0=-1$. Therefore, we must show that the solution (\ref{eq:bjSol}) satisfies (\ref{eq:bjRecurrence}). For $b_j$ given as in (\ref{eq:bjSol}),
	\begin{equation}\label{eq:ijsrSub}
		\sum _{r=0}^j(-1)^r\epsilon_r^{(n-j+r)}b_{j-r}=\sum _{r=0}^j(-1)^r\epsilon_r^{(n-j+r)}\sum_{i=0}^{j-r}\mathscr{K}_{i,j-r}
	\end{equation}
	and hence, after substituting $i=j-s-r$ in (\ref{eq:ijsrSub}), we see that
	\begin{align*}
		\sum _{r=0}^j(-1)^r\epsilon_r^{(n-j+r)}b_{j-r}&=\sum _{r=0}^{j}\sum _{s=0}^{j-r}(-1)^r\epsilon_r^{(n-j+r)}\mathscr{K}_{j-s-r,j-r}\\
		&=\sum _{s=0}^j \sum _{r=0}^{j-s} (-1)^r\epsilon_r^{(n-j+r)}\mathscr{K}_{j-s-r,j-r}.
	\end{align*}
	From (\ref{eq:ValueforRho'}) and the definition (\ref{eq:Kij}) of $\mathscr{K}_{i,j}$, we have
	\begin{align*}
		c_j-\rho c_{j-1}&+\sum _{r=0}^j(-1)^r\epsilon_r^{(n-j+r)}b_{j-r}\\
		=\hspace{2mm}&-(c_1c_{j-1}-c_j)-\epsilon_1^{(n)}c_{j-1}\\
		&+\sum_{s=0}^{j}(c_1c_{s-1}-c_s)\sum_{r=0}^{j-s}(-1)^r\epsilon_r^{(n-j+r)}\eta_{j-s-r}^{(n-j+r+1)}\\
		&+\epsilon_1^{(n)}\sum_{s=0}^jc_s\sum_{r=0}^{j-s}(-1)^r\epsilon_r^{(n-j+r)}\eta_{j-s-r-1}^{(n-j+r+1)};
	\end{align*}
	however, we note that
	\begin{multline}\label{eq:VanishingExtraTerms}
		\sum_{s=0}^jc_s\sum_{r=0}^{j-s}(-1)^r\epsilon_r^{(n-j+r)}\eta_{j-s-r-1}^{(n-j+r+1)}=\\
		\sum_{s=0}^{j-1}c_s\sum_{r=0}^{j-s-1}(-1)^r\epsilon_r^{(n-j+r)}\eta_{j-s-r-1}^{(n-j+r+1)},
	\end{multline}
	since the additional terms on the left-hand side of (\ref{eq:VanishingExtraTerms}) vanish. Therefore
	\begin{align*}
		c_j-\rho c_{j-1}&+\sum _{r=0}^j(-1)^r\epsilon_r^{(n-j+r)}b_{j-r}\\
		=\hspace{2mm}&-(c_1c_{j-1}-c_j)-\epsilon_1^{(n)}c_{j-1}\\
		&+\sum_{s=0}^{j}(c_1c_{s-1}-c_s)\sum_{r=0}^{j-s}(-1)^r\epsilon_r^{(n-j+r)}\eta_{j-s-r}^{(n-j+r+1)}\\
		&+\epsilon_1^{(n)}\sum_{s=0}^{j-1}c_s\sum_{r=0}^{j-s-1}(-1)^r\epsilon_r^{(n-j+r)}\eta_{j-s-r-1}^{(n-j+r+1)}\\
		=\hspace{2mm}&\:\sum_{s=0}^{j-1}(c_1c_{s-1}-c_s)\sum_{r=0}^{j-s}(-1)^r\epsilon_r^{(n-j+r)}\eta_{j-s-r}^{(n-j+r+1)}\\
		&+\epsilon_1^{(n)}\sum_{s=0}^{j-2}c_s\sum_{r=0}^{j-s-1}(-1)^r\epsilon_r^{(n-j+r)}\eta_{j-s-r-1}^{(n-j+r+1)}\\
		=\hspace{2mm}&\:0,
	\end{align*}
	where the final equality follows from Lemma \ref{lem:STRelation}.
\end{proof}

\begin{remark}
	Note that, if the solution (\ref{eq:bjSol}) for $b_j$ is considered as a multivariable polynomial in $a_1,a_2,\ldots,a_n$, then $\mathscr{K}_{i,j}$ is the sum of all terms of degree $i$. For all $j=2,3,\ldots,n$ and $m=2,3,\ldots,j$, the sum of all terms of degree $m$ on the right-hand side of (\ref{eq:bjRecurrence}) is equal to
	\[
		-\sum_{r=0}^m(-1)^r\epsilon_r^{(n-j+r)}\mathscr{K}_{m-r,j-r},
	\]
	and hence, (\ref{eq:bjRecurrence}) implies
	\begin{equation}\label{eq:KijRecurrence}
		\sum_{r=0}^m(-1)^r\epsilon_r^{(n-j+r)}\mathscr{K}_{m-r,j-r}=0
	\end{equation}
	for all $j=2,3,\ldots,n$ and $m=2,3,\ldots,j$.
\end{remark}

\section{Proof of main result}\label{sec:ProofOfMainResult}

In the previous section, we considered the elementary symmetric functions of the real numbers $a_1,a_2,\ldots,a_n$, but we have yet to examine the elementary symmetric functions of the complex numbers $\rho,\lambda_2,\lambda_3,\ldots,\lambda_n$. Since $\lambda_2,\lambda_3,\ldots,\lambda_n$ shall have negative real parts, the signs of these elementary symmetric functions may be either positive or negative. It turns out that there is some regularity in the pattern of these signs, as was shown in \cite{HermiteBiehlerExtension}:

\begin{theorem}{\bf\cite{HermiteBiehlerExtension}}\label{thm:LSVariant}
	Let
	\[
		f(x):=(x-\rho)\prod_{i=2}^n(x-\lambda_i)=x^n+c_1x^{n-1}+c_2x^{n-2}+\cdots+c_n
	\]
	be a real polynomial, where $\rho$ is real and $\lambda_2,\lambda_3,\ldots,\lambda_n$ are complex numbers with nonpositive real parts. Then for each $k\in\{1,2,\ldots,n-2\}$, $c_k\leq0$ implies $c_{k+2}\leq0$.
\end{theorem}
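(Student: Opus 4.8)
The plan is to translate the statement into an inequality among coefficients. Write $f(x)=(x-\rho)g(x)$, where
\[
  g(x):=\prod_{i=2}^{n}(x-\lambda_i)=x^{n-1}+d_1x^{n-2}+\cdots+d_{n-1},
\]
and put $d_0:=1$ and $d_j:=0$ for $j<0$ or $j>n-1$; comparing coefficients gives $c_k=d_k-\rho\,d_{k-1}$ for every $k$. Since $f$ is real, each non-real $\lambda_i$ occurs together with $\overline{\lambda_i}$, so $g$ factors over $\mathbb{R}$ into factors $x+a$ with $a\ge 0$ and factors $x^{2}-2(\mathrm{Re}\,\lambda)x+|\lambda|^{2}$ with nonnegative coefficients; being a product of polynomials with nonnegative coefficients, $g$ has $d_j\ge 0$ for all $j$. (Note also that $c_k$ and $c_{k+2}$ multiply powers of $x$ of the same parity, so the theorem asserts that, within each of the two parity classes of indices, the coefficient sequence passes from positive to nonpositive values at most once.)

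The heart of the matter is the following ``spread-three log-concavity'' of the coefficients of any real polynomial whose roots all lie in the closed left half-plane:
\[
  d_{k-1}\,d_{k+2}\ \le\ d_k\,d_{k+1}\qquad\text{for every }k;
\]
call this inequality $(\ast)$. Granting $(\ast)$, suppose first that $d_{k-1}>0$ and $d_{k+1}>0$. If $c_k\le 0$ then $\rho\ge d_k/d_{k-1}\ge 0$, so $(\ast)$ gives $d_{k+2}/d_{k+1}\le d_k/d_{k-1}\le\rho$, whence $c_{k+2}=d_{k+2}-\rho\,d_{k+1}\le 0$. The remaining cases — where $d_{k-1}$ or $d_{k+1}$ vanishes, or where $\rho\le 0$ — are handled by a structural description of where the sequence $(d_0,\dots,d_{n-1})$ can vanish: writing $g(x)=x^{r}\tilde g(x)$ with $\tilde g(0)\ne 0$, the coefficient list of $g$ is that of $\tilde g$ followed by $r$ zeros, and $\tilde g$ has strictly positive coefficients unless $\tilde g$ is a polynomial in $x^{2}$, in which case its odd-indexed coefficients are its only interior zeros; in particular $g$ has two consecutive zero coefficients only inside the terminal block of $r$ zeros. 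With this in hand the leftover cases are quick. If $d_{k-1}=0$, then $c_k=d_k\ge 0$, so $c_k\le 0$ forces $d_k=0$; then $d_{k-1}=d_k=0$ lie in the terminal block, so $d_{k+1}=d_{k+2}=0$ and $c_{k+2}=0$. If $d_{k-1}>0$ while $d_{k+1}=0$, then $d_{k+1}$ lies in the terminal block (it cannot be an interior odd-indexed zero, as that would force $d_{k-1}$, also at an odd index, to vanish), so again $d_{k+2}=0$ and $c_{k+2}=0$. And if $\rho\le 0$, then $f$ itself has all its roots in the closed left half-plane, hence all $c_i\ge 0$; so $c_k\le 0$ forces $c_k=0$, and the same structural fact applied to $f$ gives $c_{k+2}=0$.

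It remains to prove $(\ast)$, which is a statement purely about real polynomials with all roots in the closed left half-plane. The clean route is to observe that the quantities $d_k d_{k+1}-d_{k-1}d_{k+2}$ are precisely certain $2\times 2$ minors, taken from two consecutive rows and two consecutive columns, of the Hurwitz matrix of $g$ (the matrix with $(i,j)$ entry $d_{2j-i}$), and then to invoke the classical theorem that a real polynomial has all its roots in the closed left half-plane if and only if its Hurwitz matrix is totally nonnegative; $(\ast)$ follows at once. A more elementary proof by induction on the number of irreducible real factors of $g$ is possible, but the inductive step forces one to carry along not just $(\ast)$ but an expanding family of minor inequalities — for instance the ``spread-four'' inequality $d_{k-2}d_{k+2}\le d_k^{2}$ is already needed when multiplying by a factor $x+a$ — so one effectively ends up reproving the total-nonnegativity theorem.

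I expect the main obstacle to be $(\ast)$ itself: everything else is either a one-line manipulation or routine bookkeeping, whereas $(\ast)$ needs genuine input about the location of the roots, whether imported as the Hurwitz total-nonnegativity theorem or established directly through a careful expansion. The secondary difficulty is the treatment of vanishing coefficients of $g$ and of $f$, which is exactly what the structural description above is designed to handle. A conceptually different line of attack, closer to the name of the result, would be a Hermite--Biehler-type analysis of the real and imaginary parts of $f(iy)$, tracking how the single root $\rho$ in the open right half-plane disturbs the interlacing of their roots; this reduces to a comparably delicate sign analysis and does not seem to avoid the essential point.
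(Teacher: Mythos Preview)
The paper does not prove Theorem~\ref{thm:LSVariant}: it is quoted from \cite{HermiteBiehlerExtension} and invoked as an external result, so there is no in-paper argument to compare against.

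Your proof is correct. The reduction via $c_k=d_k-\rho\,d_{k-1}$ to the inequality $(\ast)\colon d_{k-1}d_{k+2}\le d_kd_{k+1}$ on the coefficients of $g$ is the natural move, and the case analysis for vanishing $d_j$ and for $\rho\le 0$ is sound; in particular, your structural claim that two consecutive coefficients of $g$ can vanish only inside the terminal block coming from the factor $x^r$ holds, since when $\tilde g$ is a polynomial in $x^{2}$ its degree $n-1-r$ is even, so the nonzero constant term $d_{n-1-r}$ separates the interior odd-index zeros from the terminal block and no two interior zeros are adjacent. The substantive input is $(\ast)$, which is indeed a $2\times2$ consecutive-row, consecutive-column minor of the Hurwitz matrix of $g$, and the total nonnegativity of that matrix for real polynomials with all roots in the closed left half-plane is the Asner--Kemperman theorem (the closed case following from the strict one by continuity). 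Judging by the title of \cite{HermiteBiehlerExtension}, the original argument presumably goes through a Hermite--Biehler interlacing analysis rather than through total nonnegativity; your route trades self-containedness for a clean appeal to an existing structural theorem, but the two are equivalent in strength.
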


In order to show that the $b_i$ are nonnegative under the hypotheses of Theorem \ref{thm:DiagonalElements}, we will require some inequalities involving elementary symmetric functions of complex variables. The required inequalities (which take a similar form to Newton's Inequalities) are given in \cite{NewtonLikeInequalities}.

\begin{theorem}{\bf\cite{NewtonLikeInequalities}}\label{thm:DifferentParity}
	Let $\mathcal{X}:=(x_1,x_2,\ldots,x_n)$ be a list of self-conjugate variables with nonnegative real parts. If $k$ and $l$ have different parity, $1\leq k<l\leq n-1$, then
	\begin{equation*}
		e_k(\mathcal{X})e_l(\mathcal{X})\geq e_{k-1}(\mathcal{X})e_{l+1}(\mathcal{X}).
	\end{equation*}
\end{theorem}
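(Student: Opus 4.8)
The plan is to deduce the inequality from two facts about the (real) sequence $(e_j(\mathcal{X}))_{j\geq 0}$ and then induct on $n$ by deleting a real element or a conjugate pair of $\mathcal{X}$. First I would record the two preliminary facts. Since $\mathcal{X}$ is self-conjugate, $\prod_i(x-x_i)$ is a real polynomial and every $e_j(\mathcal{X})$ is real; since moreover each $x_i$ has nonnegative real part, $\prod_i(x+x_i)$ is a product of factors $x+r$ with $r\geq 0$ and $x^2+2ax+(a^2+b^2)$ with $a\geq 0$, so it has nonnegative coefficients and hence $e_j(\mathcal{X})\geq 0$ for all $j$. Writing $\prod_i(x+x_i)=g(x^2)+x\,h(x^2)$, the Hermite--Biehler theorem (this polynomial has no zero in the open right half-plane) shows that $g$ and $h$ have only real, nonpositive zeros; applying Newton's inequalities separately to $g$ and to $h$ then gives $e_j(\mathcal{X})^2\geq e_{j-2}(\mathcal{X})\,e_{j+2}(\mathcal{X})$, and consequently, by the ``decreasing ratios'' argument, $e_j(\mathcal{X})\,e_{j'}(\mathcal{X})\geq e_{j-2}(\mathcal{X})\,e_{j'+2}(\mathcal{X})$ whenever $j\leq j'$ have the same parity. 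Both facts pass to any sublist obtained by deleting a real element or a conjugate pair, such a sublist being again self-conjugate with nonnegative real parts.

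Now induct on $n:=|\mathcal{X}|$; the statement is vacuous for $n\leq 2$. For $n\geq 3$, if $\mathcal{X}$ has a real element $t\geq 0$ put $\mathcal{Y}:=\mathcal{X}\setminus\{t\}$ and $u_j:=e_j(\mathcal{Y})$, so that $e_j(\mathcal{X})=u_j+t\,u_{j-1}$; otherwise $n$ is even, and for a conjugate pair $a\pm bi$ ($a\geq 0$) put $\mathcal{Y}:=\mathcal{X}$ minus that pair, $p:=2a\geq 0$, $q:=a^2+b^2\geq 0$, so that $e_j(\mathcal{X})=u_j+p\,u_{j-1}+q\,u_{j-2}$. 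Expanding $e_k(\mathcal{X})e_l(\mathcal{X})-e_{k-1}(\mathcal{X})e_{l+1}(\mathcal{X})$ as a polynomial in $t$ (three bracketed coefficients) or in $p$ and $q$ (six bracketed coefficients), it suffices to show each bracket is nonnegative. The hypothesis that $k$ and $l$ have different parity is exactly what makes this work: several of the brackets turn out to have the form $u_au_b-u_{a-1}u_{b+1}$ with $a<b$ again of different parity, hence are nonnegative by the inductive hypothesis applied to $\mathcal{Y}$; the remaining brackets have all four indices in a single parity class, hence are instances of the intra-class inequality $u_ju_{j'}\geq u_{j-2}u_{j'+2}$ from the previous paragraph. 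The single exception is the coefficient of $q$, which has neither shape directly; there I would use the telescoping $u_ku_{l-2}+u_{k-2}u_l-u_{k-1}u_{l-1}-u_{k-3}u_{l+1}=(u_ku_{l-2}-u_{k-1}u_{l-1})+(u_{k-2}u_l-u_{k-3}u_{l+1})$ to split it into two pieces, each again covered by the inductive hypothesis. Degenerate situations (an index dropping below $0$, or $l=k+1$, which makes a bracket vanish identically) are handled directly using nonnegativity of the $u_j$ and $u_j^2\geq u_{j-2}u_{j+2}$.

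I expect the main obstacle to be precisely this bookkeeping: tracking which parity class each of the four indices in each bracket falls into (so one knows whether the inductive hypothesis or the intra-class inequality applies), finding the telescoping that resolves the $q$-coefficient, and verifying that the boundary cases genuinely collapse to something manifestly nonnegative rather than to an instance with the wrong parity. As a non-self-contained alternative, one could observe that the asserted inequalities are exactly the nonnegativity of certain $2\times 2$ minors of the Hurwitz matrix of $\prod_i(x+x_i)$, and invoke a limiting form of Asner's theorem that this matrix is totally nonnegative for stable polynomials; the inductive argument above avoids that machinery.
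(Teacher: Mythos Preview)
The paper does not prove this statement at all: Theorem~\ref{thm:DifferentParity} is quoted from \cite{NewtonLikeInequalities} and used as a black box (as is Theorem~\ref{thm:LSVariant}). So there is no in-paper proof to compare your attempt against; I can only assess your argument on its own merits and in light of what the paper's treatment of the companion Lemma~5.3 suggests about the cited source.

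Your plan is sound and, with the bookkeeping filled in, yields a correct proof. The two preliminary facts are exactly right: nonnegativity of all $e_j(\mathcal{X})$ is immediate from the factorisation of $\prod_i(x+x_i)$ into linear and quadratic factors with nonnegative coefficients, and the even/odd split $\prod_i(x+x_i)=g(x^2)+x\,h(x^2)$ together with Hermite--Biehler gives real-rootedness of $g$ and $h$, whence Newton's inequalities on each yield the intra-parity log-concavity $e_j^2\ge e_{j-2}e_{j+2}$. Two small points deserve an explicit sentence in a write-up: first, Hermite--Biehler in the form you need (roots in the \emph{closed} left half-plane) requires a perturbation/limiting argument; second, the passage from $e_j^2\ge e_{j-2}e_{j+2}$ to the ``decreasing ratios'' inequality $e_je_{j'}\ge e_{j-2}e_{j'+2}$ uses not only log-concavity but the fact that the nonzero terms in each parity class form an initial segment---this holds because those terms are elementary symmetric functions of the nonnegative reals $-(\text{roots of }g)$ or $-(\text{roots of }h)$, but it should be stated.

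The induction step checks out. In the real-deletion case the three coefficients of $1,\,t,\,t^2$ are respectively of types (different parity)/(same parity)/(different parity), handled by the inductive hypothesis, the intra-parity inequality, and the inductive hypothesis again. In the conjugate-pair case the six coefficients of $1,\,p,\,q,\,p^2,\,pq,\,q^2$ are of types (diff)/(same)/(mixed)/(diff)/(same)/(diff), and your telescoping of the $q$-coefficient into $(u_ku_{l-2}-u_{k-1}u_{l-1})+(u_{k-2}u_l-u_{k-3}u_{l+1})$ reduces it to two different-parity instances; note that when $l=k+1$ the first of these is identically zero, not an out-of-range instance. The remaining boundary cases (an index $<0$, or $l$ close to $|\mathcal{Y}|$ so that $u_{l}$ or $u_{l+1}$ vanishes) all collapse to nonnegativity of a single $u_j$, as you say.

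As for the relation to \cite{NewtonLikeInequalities}: the paper's own sketch of Lemma~5.3 indicates that the cited source proceeds via the same Hermite--Biehler/Newton decomposition you begin with. Your Hurwitz-matrix alternative (these inequalities are $2\times 2$ minors of the Hurwitz matrix of $\prod_i(x+x_i)$, which is totally nonnegative by Asner's theorem for stable polynomials, plus a limit) is the cleanest route if one is willing to invoke that machinery; your inductive argument is more self-contained but correspondingly heavier on bookkeeping.
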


Since the real parts of the numbers $\rho,\lambda_2,\lambda_3,\ldots,\lambda_n$ do not all share the same sign, Theorem \ref{thm:DifferentParity} alone will not suffice.

Let $\mathcal{X}:=(x_1,x_2,\ldots,x_n)$ be a list of self-conjugate complex numbers with nonnegative real parts and let $r$ be real. Define
\begin{align}
	Q_{2k}(r;\mathcal{X})&:=\frac{e_{2k}(\mathcal{X})-re_{2k-1}(\mathcal{X})}{\binom{\lceil n/2 \rceil}{k}} \: : \hspace{4mm} k=0,1,\ldots,\lceil n/2 \rceil, \label{eq:QNotation} \\
	Q_{2k+1}(r;\mathcal{X})&:=\frac{e_{2k+1}(\mathcal{X})-re_{2k}(\mathcal{X})}{\binom{\lfloor n/2 \rfloor}{k}} \: : \hspace{4mm} k=0,1,\ldots,\lfloor n/2 \rfloor \notag
\end{align}
and observe that for even $n$ (say $n=2m$), if
\begin{equation}\label{eq:gForASLemma}
	g(x):=x^{2m}+e_1(\mathcal{X})x^{2m-1}+e_2(\mathcal{X})x^{2m-2}+\cdots+e_{2m}(\mathcal{X}),
\end{equation}
then the polynomial
\begin{align}
	(x-r)g(x)&=x^{2m+1}+Q_1(r;\mathcal{X})x^{2m}\notag\\
	&+\binom{m}{1}Q_2(r;\mathcal{X})x^{2m-1}+\binom{m}{1}Q_3(r;\mathcal{X})x^{2m-2}+\cdots\notag\\
	&+\binom{m}{m}Q_{2m}(r;\mathcal{X})x+\binom{m}{m}Q_{2m+1}(r;\mathcal{X})\label{eq:rXPolyEven}
\end{align}
has roots $r,-x_1,-x_2,\ldots,-x_n$.


\begin{lemma}\label{lem:ASInequalities}
	Let $\mathcal{X}:=(x_1,x_2,\ldots,x_n)$ be a self-conjugate list of complex numbers with nonnegative real parts and let $r\in\mathbb{R}$. If $Q_{2s}(r;\mathcal{X})>0$, then $Q_{2i}(r;\mathcal{X})>0:$ $i=0,1,\ldots,s$ and
	\begin{equation}\label{eq:ASInequalitiesEven}
		Q_{2k}(r;\mathcal{X})Q_{2l}(r;\mathcal{X})\geq Q_{2k-2}(r;\mathcal{X})Q_{2l+2}(r;\mathcal{X}): \hspace{4mm} 1\leq k\leq l\leq s-1.
	\end{equation}
	Similarly, if $Q_{2t+1}(r;\mathcal{X})>0$, then $Q_{2i+1}(r;\mathcal{X})>0:$ $i=0,1,\ldots,t$ and
	\begin{multline}\label{eq:ASInequalitiesOdd}
		Q_{2k+1}(r;\mathcal{X})Q_{2l+1}(r;\mathcal{X})\geq Q_{2k-1}(r;\mathcal{X})Q_{2l+3}(r;\mathcal{X}):\\ 1\leq k\leq l\leq t-1.
	\end{multline}
\end{lemma}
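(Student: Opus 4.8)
The plan is to prove the two assertions of the lemma separately, working throughout with the real polynomial $f(x):=(x-r)g(x)$, where $g(x):=\prod_{i=1}^{n}(x+x_i)$; the roots of $f$ are $r$ (real) together with $-x_1,-x_2,\ldots,-x_n$, each of nonpositive real part, and (extending the display preceding the lemma to arbitrary $n$) the coefficient of $x^{n+1-j}$ in $f$ is $e_j(\mathcal{X})-re_{j-1}(\mathcal{X})$, a positive scalar multiple of $Q_j(r;\mathcal{X})$. Thus the $j$-th coefficient of $f$ and $Q_j(r;\mathcal{X})$ have the same sign, and for the positivity assertions I would simply invoke Theorem \ref{thm:LSVariant} for $f$: there a nonpositive coefficient forces the coefficient two places further along to be nonpositive, so on taking contrapositives (and using $Q_0(r;\mathcal{X})=1>0$ to anchor the chain) $Q_{2s}(r;\mathcal{X})>0$ propagates downward to give $Q_{2i}(r;\mathcal{X})>0$ for $i=0,1,\ldots,s$, and likewise $Q_{2t+1}(r;\mathcal{X})>0$ gives $Q_{2i+1}(r;\mathcal{X})>0$ for $i=0,1,\ldots,t$.

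For the inequalities (\ref{eq:ASInequalitiesEven}) and (\ref{eq:ASInequalitiesOdd}) I would first reduce to the diagonal cases $Q_{2k}(r;\mathcal{X})^2\ge Q_{2k-2}(r;\mathcal{X})Q_{2k+2}(r;\mathcal{X})$ for $1\le k\le s-1$ (and the analogue for the odd-indexed $Q$'s): given these, the positivity just established makes the consecutive ratios $Q_{2k}(r;\mathcal{X})/Q_{2k-2}(r;\mathcal{X})$ nonincreasing, and the general inequalities follow by chaining ratios. To obtain the diagonal inequalities, the idea is to split $f$ into its even- and odd-degree parts: writing $g=g_{\mathrm e}+g_{\mathrm o}$ for the even- and odd-degree parts of $g$ and using that multiplication by $x-r$ mixes the even and odd parts in a controlled way, one finds that the even-degree part of $f$ is a polynomial in $x^2$ while the odd-degree part of $f$ is $x$ times a polynomial in $x^2$; after the substitution $x^2\mapsto y$ these yield two polynomials $\phi$ and $\psi$ whose coefficients, taken in the binomial normalisations built into (\ref{eq:QNotation}), are precisely the numbers $Q_{2k}(r;\mathcal{X})$ and the numbers $Q_{2k+1}(r;\mathcal{X})$ (which of $\phi,\psi$ carries the even-indexed family depends on the parity of $n$). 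Because $\mathcal{X}$ has nonnegative real parts, $g_{\mathrm e}$ and $g_{\mathrm o}$ are real-rooted with interlacing zeros---this is the content of the Newton-type inequalities of Theorem \ref{thm:DifferentParity} applied to $\mathcal{X}$---and hence, after the substitution $x^2\mapsto y$, $\phi$ and $\psi$ are real linear combinations of an interlacing pair of polynomials, so they too are real-rooted; Newton's inequalities for a real-rooted polynomial written in binomial normalisation then deliver exactly the diagonal inequalities.

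The hard part will be the second half. The even/odd bookkeeping must be done carefully---the parity of $n$ controls whether $\lceil n/2\rceil$ or $\lfloor n/2\rfloor$ is the binomial index attached to a given coefficient, and accordingly whether applying $x^2\mapsto y$ to $g_{\mathrm e}$ and $g_{\mathrm o}$ produces an interlacing pair of consecutive or of equal degree, which changes the precise pencil argument---and one has to handle degenerate subcases, such as a vanishing leading coefficient (for instance $Q_1(r;\mathcal{X})=e_1(\mathcal{X})-r=0$) or zeros of $g$ on the imaginary axis, where the interlacing is only weak and a limiting argument keeps the relevant pencil real-rooted. An alternative that avoids the real-rootedness argument, at the price of more computation, is to prove the diagonal inequalities by expanding $(e_{2k}(\mathcal{X})-re_{2k-1}(\mathcal{X}))^2$ and $(e_{2k-2}(\mathcal{X})-re_{2k-3}(\mathcal{X}))(e_{2k+2}(\mathcal{X})-re_{2k+1}(\mathcal{X}))$ as quadratic polynomials in $r$ and bounding them coefficient by coefficient via instances of Theorem \ref{thm:DifferentParity} for $\mathcal{X}$, with the signs controlled by the positivity from the first half. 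Either way, the lemma is reduced to the inequalities of \cite{NewtonLikeInequalities} together with the sign-pattern theorem of \cite{HermiteBiehlerExtension}.
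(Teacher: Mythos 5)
Your overall route is the same as the paper's: positivity of the $Q_j$ comes from applying Theorem \ref{thm:LSVariant} to $(x-r)\prod_{i=1}^n(x+x_i)$, whose coefficients are positive multiples of the $Q_j(r;\mathcal{X})$, and the inequalities come from showing that the two polynomials obtained from the even- and odd-degree parts after the substitution $x^2\mapsto y$ are real-rooted and then invoking Newton's inequalities in the binomial normalisation of (\ref{eq:QNotation}) (the paper compresses this second stage into a citation of the proof of Theorem 2.3 in \cite{NewtonLikeInequalities}). Your reduction of (\ref{eq:ASInequalitiesEven})--(\ref{eq:ASInequalitiesOdd}) to the consecutive (log-concavity) cases and chaining of ratios is fine, since positivity of the intermediate $Q$'s is already in hand.

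The one substantive defect is the justification you give for the key real-rootedness step. You assert that the even and odd parts $g_{\mathrm e}$, $g_{\mathrm o}$ of $g$ are real-rooted with interlacing zeros and that ``this is the content of the Newton-type inequalities of Theorem \ref{thm:DifferentParity}.'' It is not: Theorem \ref{thm:DifferentParity} only supplies inequalities of the form $e_k e_l\geq e_{k-1}e_{l+1}$, and log-concavity-type inequalities on coefficients never imply real-rootedness or interlacing (they are consequences of it, not equivalents). The correct engine here is the Hermite--Biehler theorem: since every $-x_i$ has nonpositive real part, $g$ is (weakly) Hurwitz stable, so writing $g(x)=P(x^2)+xR(x^2)$ the polynomials $P$ and $R$ have real, nonpositive, (weakly) interlacing roots, and then the even and odd parts of $(x-r)g(x)$, namely $yR(y)-rP(y)$ and $P(y)-rR(y)$ up to the substitution, are real-rooted as real combinations of an interlacing pair -- with a limiting argument when roots lie on the imaginary axis, exactly the degenerate situation you flag. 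This is precisely what the proof of \cite[Theorem 2.3]{NewtonLikeInequalities} that the paper leans on provides; with that citation (or the Hermite--Biehler argument spelled out) in place of the appeal to Theorem \ref{thm:DifferentParity}, your argument goes through. A small further remark: the concern about $Q_1(r;\mathcal{X})=0$ does not actually arise in the odd-index statement, since for $t\geq1$ the positivity propagation already gives $Q_1>0$, and for $t=0$ there is nothing to prove.
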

\begin{proof}
	Let $Q_{2s}(r;\mathcal{X})$ and $Q_{2t+1}(r;\mathcal{X})$ be positive. First suppose $n$ is even and write $n=2m$. Defining $g$ as in (\ref{eq:gForASLemma}), recall that the polynomial (\ref{eq:rXPolyEven}) has roots $r,-x_1,-x_2,\ldots,-x_n$, and therefore, by Theorem \ref{thm:LSVariant}, $Q_{2i}(r;\mathcal{X})>0$ for each $i=0,1,\ldots,s$ and $Q_{2i+1}(r;\mathcal{X})>0$ for each $i=0,1,\ldots,t$.
	
	From this point onward, the proof is essentially the same as the proof of \cite[Theorem 2.3]{NewtonLikeInequalities}. For this reason, we omit the details of the proof and instead refer the reader to \cite{NewtonLikeInequalities}. As in the cited proof, one can show that the polynomials
	\[
		f_0(w):=w^m-\binom{m}{1}Q_2(r;\mathcal{X})w^{m-1}+\cdots+(-1)^m\binom{m}{m}Q_{2m}(r;\mathcal{X})
	\]
	and
	\[
		f_1(w):=w^m-\binom{m}{1}\frac{Q_3(r;\mathcal{X})}{Q_1(r;\mathcal{X})}w^{m-1}+\cdots+(-1)^m\binom{m}{m}\frac{Q_{2m+1}(r;\mathcal{X})}{Q_1(r;\mathcal{X})}
	\]
	have real roots. Hence, we may apply Newton's Inequalities to the coefficients of $f_0$ and $f_1$, giving (\ref{eq:ASInequalitiesEven}) and (\ref{eq:ASInequalitiesOdd}), respectively.
\end{proof}

We are now ready to prove the main result of this paper:

\begin{proof}[Proof of Theorem \ref{thm:DiagonalElements}]
	Necessity was established in Observation \ref{obv:DiagonalElementsNecessity}, so now suppose (\ref{eq:s1ACondition}) and (\ref{eq:s2ACondition}) hold. We will show that $\sigma$ is the spectrum of a nonnegative matrix $A$ of the form (\ref{eq:AMatrix}).
	
	Let
	\[
		c_i:=e_i(-\lambda_2,-\lambda_3,\ldots,-\lambda_n) \hspace{2mm} : \hspace{6mm} i\in\mathbb{Z}
	\]
	and let
	\[
		g(x):=\prod_{i=2}^n(x-\lambda_i)=x^{n-1}+c_1x^{n-2}+c_2x^{n-3}+\cdots+c_{n-1}.
	\]
	Since $\mathrm{Re}(\lambda_i)\leq0$: $i=2,3,\ldots,n$, it follows that $c_i\geq0$ for all $i$. In addition, since $s_1(\Delta)=s_1(\sigma)=\rho-c_1$, we must have $\rho=c_1+s_1(\Delta)$, or, in other notation, $\rho=c_1+\epsilon_1^{(n)}$.
	
	Therefore, by Lemma \ref{lem:bjSol}, it sufices to show that the quantities $b_2,b_3,\ldots,b_n$ given in (\ref{eq:bjSol}) are nonnegative. Firstly, we note that (\ref{eq:Kij}) can be rearranged to give
	\[
		\mathscr{K}_{i,j}:=\eta_i^{(n-j+1)}c_1c_{j-i-1}+\left(\epsilon_1^{(n)}\eta_{i-1}^{(n-j+1)}-\eta_i^{(n-j+1)}\right)c_{j-i}.
	\]
	If $i>0$, then
	\[
		\eta_i^{(n-j+1)}=\sum_{r=1}^{n-j+1}a_r\eta_{i-1}^{(r)}\leq\sum_{r=1}^na_r\eta_{i-1}^{(n-j+1)}=\epsilon_1^{(n)}\eta_{i-1}^{(n-j+1)}.
	\]
	Hence
	\[
		\mathscr{K}_{i,j}\geq0 \hspace{2mm} : \hspace{6mm} i=1,2,\ldots,j; \hspace{4mm} j=2,3,\ldots,n.
	\]
	Note also that
	\[
		\mathscr{K}_{0,j}=c_1c_{j-1}-c_{j} \hspace{2mm} : \hspace{6mm} j=2,3,\ldots,n.
	\]
	By Theorem \ref{thm:DifferentParity}, $\mathscr{K}_{0,j}>0$ whenever $j$ is odd. Hence $b_j>0$ for each odd index $j$.
	
	Let us now consider $b_j$ for even indices $j$. By (\ref{eq:b2}), $b_2\geq0$. Now fix $k\in\{2,3,\ldots,\lfloor n/2 \rfloor\}$. We will show that $b_{2k}\geq0$. If $\mathscr{K}_{0,2k}\geq0$, we are done, so from now on, assume $\mathscr{K}_{0,2k}<0$. Now consider the polynomial
	\[
		q(x):=(x-c_1)g(x)=x^n-\mathscr{K}_{0,2}x^{n-2}-\mathscr{K}_{0,3}x^{n-3}-\cdots-\mathscr{K}_{0,n}.
	\]	
	By considering the even terms in $q(x)$, we see that Theorem \ref{thm:LSVariant} implies $\mathscr{K}_{0,2j}<0:$ $j=1,2,\ldots k$.
	
	At this point, it is helpful to consider the quantities
	\begin{align*}
		W_1&:=\mathscr{K}_{0,2k}\mathscr{K}_{1,2}-\mathscr{K}_{0,2}\mathscr{K}_{1,2k},\\
		W_2&:=\mathscr{K}_{0,2k}\mathscr{K}_{2,2}-\mathscr{K}_{0,2}\mathscr{K}_{2,2k}.
	\end{align*}
	We will show that $W_1,W_2\geq0$.
	
	Let us first examine $W_1$. Note that the expression for $\mathscr{K}_{1,2}$ simplifies to
	\[
		\mathscr{K}_{1,2}=\epsilon_1^{(n)}c_1.
	\]
	In addition, notice that $\mathscr{K}_{1,2k}$ may be written in the form
	\begin{equation}\label{eq:K1,2k}
		\mathscr{K}_{1,2k}=\epsilon_1^{(n-2k+1)}c_1c_{2k-2}+{\epsilon_1'}^{(n-2k+1)}c_{2k-1}.
	\end{equation}
	Since $c_1c_{2k-2}\geq c_{2k-1}$ by Theorem \ref{thm:DifferentParity}, it follows that
	\[
		\mathscr{K}_{1,2k}\geq \epsilon_1^{(n)}c_{2k-1}.
	\]
	Therefore
	\begin{align*}
		W_1&\geq \epsilon_1^{(n)}( \mathscr{K}_{0,2k}c_1-\mathscr{K}_{0,2}c_{2k-1})\\
		&=\epsilon_1^{(n)}(c_2c_{2k-1}-c_1c_{2k})\\
		&\geq0,
	\end{align*}
	where the final inequality is due to Theorem \ref{thm:DifferentParity}.
	
	Next we examine $W_2$. Similarly to (\ref{eq:K1,2k}), we note that
	\[
		\mathscr{K}_{1,2k-1}=\epsilon_1^{(n-2k+2)}c_1c_{2k-3}+{\epsilon_1'}^{(n-2k+2)}c_{2k-2}.
	\]
	Hence
	\[
		\mathscr{K}_{1,2k-1}+{\epsilon_1'}^{(n-2k+2)}\mathscr{K}_{0,2k-2}=\epsilon_1^{(n)}c_1c_{2k-3}\geq0,
	\]
	i.e.
	\begin{equation}\label{eq:DiagProof1}
		\mathscr{K}_{1,2k-1}\geq-{\epsilon_1'}^{(n-2k+2)}\mathscr{K}_{0,2k-2}.
	\end{equation}
	Letting $m=2$ and $j=2k$ in (\ref{eq:KijRecurrence}), we see that
	\begin{equation}\label{eq:DiagProof2}
		\mathscr{K}_{2,2k}=\epsilon_1^{(n-2k+1)}\mathscr{K}_{1,2k-1}-\epsilon_2^{(n-2k+2)}\mathscr{K}_{0,2k-2}
	\end{equation}
	and combining (\ref{eq:DiagProof2}) with (\ref{eq:DiagProof1}) gives
	\begin{equation}\label{eq:DiagProof3}
		\mathscr{K}_{2,2k}\geq-\left( \epsilon_1^{(n-2k+1)}{\epsilon_1'}^{(n-2k+2)}+\epsilon_2^{(n-2k+2)} \right)\mathscr{K}_{0,2k-2}.
	\end{equation}
	Next, we note that
	\begin{eqnarray*}
		\epsilon_2^{(n)}&=&\epsilon_2^{(n-2k+1)}+{\epsilon_2'}^{(n-2k+1)}+\epsilon_1^{(n-2k+1)}{\epsilon_1'}^{(n-2k+1)}\\
		&=&\left(\epsilon_2^{(n-2k+2)}-a_{n-2k+2}\epsilon_1^{(n-2k+1)}\right)+{\epsilon_2'}^{(n-2k+1)}\\
		&&+\epsilon_1^{(n-2k+1)}\left(a_{n-2k+2}+{\epsilon_1'}^{(n-2k+2)}\right)\\
		&=&\epsilon_2^{(n-2k+2)}+{\epsilon_2'}^{(n-2k+1)}+\epsilon_1^{(n-2k+1)}{\epsilon_1'}^{(n-2k+2)},
	\end{eqnarray*}
	that is to say,
	\[
		\epsilon_1^{(n-2k+1)}{\epsilon_1'}^{(n-2k+2)}+\epsilon_2^{(n-2k+2)}=\epsilon_2^{(n)}-{\epsilon_2'}^{(n-2k+1)}.
	\]
	Hence (\ref{eq:DiagProof3}) is equivalent to
	\[
		\mathscr{K}_{2,2k}\geq-\left( \epsilon_2^{(n)}-{\epsilon_2'}^{(n-2k+1)} \right)\mathscr{K}_{0,2k-2}.
	\]
	
	We also observe that, since $a_1\geq a_2\geq\cdots\geq a_n$,
	\[
		\binom{n}{2}^{-1}\epsilon_2^{(n)}\geq\binom{2k-1}{2}^{-1}{\epsilon_2'}^{(n-2k+1)}.
	\]
	Therefore
	\[
		{\epsilon_2'}^{(n-2k+1)}\leq\frac{\binom{2k-1}{2}}{\binom{n}{2}}\epsilon_2^{(n)}\leq\frac{\binom{2k-1}{2}}{\binom{2k}{2}}\epsilon_2^{(n)}=\left( 1-\frac{1}{k} \right)\epsilon_2^{(n)}
	\]
	and hence
	\begin{equation}\label{eq:DiagProof4}
		\mathscr{K}_{2,2k}\geq-\frac{1}{k}\epsilon_2^{(n)}\mathscr{K}_{0,2k-2}.
	\end{equation}
	
	Now note that
	\begin{align}\label{eq:DiagProof5}
		\mathscr{K}_{2,2}&=\epsilon_1^{(n)}\epsilon_1^{(n-1)}-\eta_2^{(n-1)}=\sum_{r=1}^{n-1}a_r\epsilon_1^{(n)}-\sum_{r=1}^{n-1}a_r\epsilon_1^{(r)}\notag\\
		&=\sum_{r=1}^{n-1}a_r{\epsilon_1'}^{(r)}=\epsilon_2^{(n)}.
	\end{align}
	By (\ref{eq:DiagProof4}) and (\ref{eq:DiagProof5}), it follows that
	\[
		W_2\geq\left( \mathscr{K}_{0,2k}+\frac{1}{k}\mathscr{K}_{0,2}\mathscr{K}_{0,2k-2} \right)\epsilon_2^{(n)};
	\]
	however, using the notation defined in (\ref{eq:QNotation}), we have
	\[
		-\mathscr{K}_{0,2i}=\binom{\lfloor n/2 \rfloor}{i}Q_{2i}(c_1;-\lambda_2,-\lambda_3,\ldots,-\lambda_n)\: : \hspace{2mm} i=1,2,\ldots,\lfloor n/2 \rfloor
	\]
	and hence, Lemma \ref{lem:ASInequalities} implies
	\[
		\frac{\mathscr{K}_{0,2}\mathscr{K}_{0,2k-2}}{-\mathscr{K}_{0,2k}}\geq\frac{\binom{\lfloor n/2 \rfloor}{1}\binom{\lfloor n/2 \rfloor}{k-1}}{\binom{\lfloor n/2 \rfloor}{k}}=k\frac{\lfloor n/2 \rfloor}{\lfloor n/2 \rfloor-k+1}>k.
	\]
	Hence $W_2\geq0$, as claimed.
	
	All that remains is to notice that
	\[
		\mathscr{K}_{0,2k}b_2-\mathscr{K}_{0,2}\left( \mathscr{K}_{0,2k}+\mathscr{K}_{1,2k}+\mathscr{K}_{2,2k} \right)=W_1+W_2\geq0
	\]
	and hence
	\[
		b_{2k}\geq\mathscr{K}_{0,2k}+\mathscr{K}_{1,2k}+\mathscr{K}_{2,2k}\geq\frac{\mathscr{K}_{0,2k}}{\mathscr{K}_{0,2}}b_2\geq0.\qedhere
	\]
\end{proof}

If $\sigma:=(\rho,\lambda_2,\lambda_3,\ldots,\lambda_n)$, where $\rho\geq0$ and $\mathrm{Re}(\lambda_i)\leq0$: $i=2,3,\ldots,n$, then we can use Theorem \ref{thm:DiagonalElements} to calculate the maximum possible diagonal element and the minimal possible diagonal element of a nonnegative matrix with spectrum $\sigma$:

\begin{corollary}\label{cor:MaxMinDiag}
	Let $\sigma:=(\rho,\lambda_2,\lambda_3,\ldots,\lambda_n)$ be realisable, where $\rho\geq0$ and $\mathrm{Re}(\lambda_i)\leq0$: $i=2,3,\ldots,n$. Then $\sigma$ is the spectrum of a nonnegative matrix with a diagonal element $a$ if and only if
	\begin{equation}\label{eq:DiagonalCor1}
		0\leq a\leq s_1(\sigma)
	\end{equation}
	and
	\begin{equation}\label{eq:DiagonalCor2}
		\left( a-\frac{s_1(\sigma)}{n} \right)^2\leq\frac{(n-1)(ns_2(\sigma)-s_1(\sigma)^2)}{n^2}.
	\end{equation}
\end{corollary}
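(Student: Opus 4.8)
The plan is to read off Corollary \ref{cor:MaxMinDiag} from Theorem \ref{thm:DiagonalElements} by choosing the remaining diagonal entries optimally. By Theorem \ref{thm:DiagonalElements}, $\sigma$ is the spectrum of a nonnegative matrix having $a$ as one of its diagonal entries if and only if there is a list $\Delta=(a_1,\ldots,a_n)$ with $a_1\geq a_2\geq\cdots\geq a_n\geq0$ and $a_j=a$ for some $j$, satisfying $s_1(\Delta)=s_1(\sigma)$ and $s_2(\Delta)\leq s_2(\sigma)$. (For the ``only if'' direction one simply takes $\Delta$ to be the diagonal list of a realising matrix and applies Observation \ref{obv:DiagonalElementsNecessity}; the hypothesis that $\sigma$ be realisable is in fact not needed here, since Theorem \ref{thm:DiagonalElements} constructs the matrix itself.)

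The first step is to reformulate the existence of such a $\Delta$ for a fixed $a\geq0$. Removing the entry equal to $a$, the question becomes whether there exist $n-1$ nonnegative reals $t_1,\ldots,t_{n-1}$ with $\sum_i t_i=s_1(\sigma)-a$ and $\sum_i t_i^2\leq s_2(\sigma)-a^2$; the monotonicity requirement on $\Delta$ costs nothing, since any list of nonnegative numbers can be sorted into nonincreasing order. Such $t_i$ exist precisely when $s_1(\sigma)-a\geq0$ (so that the linear constraint has a nonnegative solution at all) and when the minimum of $\sum_i t_i^2$ subject to $t_i\geq0$, $\sum_i t_i=s_1(\sigma)-a$ is at most $s_2(\sigma)-a^2$. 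By Cauchy--Schwarz this minimum equals $(s_1(\sigma)-a)^2/(n-1)$, attained with all $t_i$ equal.

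The second step is pure algebra: $a\geq0$ together with $s_1(\sigma)-a\geq0$ is exactly (\ref{eq:DiagonalCor1}), and multiplying out and completing the square in $a$ shows that $(s_1(\sigma)-a)^2/(n-1)\leq s_2(\sigma)-a^2$ is equivalent to (\ref{eq:DiagonalCor2}); the right-hand side of (\ref{eq:DiagonalCor2}) is nonnegative by the JLL inequality $s_1(\sigma)^2\leq n\,s_2(\sigma)$ of Theorem \ref{thm:NecessaryConditions}, so the constraint is consistent. To finish the ``if'' direction, given $a$ satisfying (\ref{eq:DiagonalCor1}) and (\ref{eq:DiagonalCor2}) one takes $\Delta$ to be $a$ together with $n-1$ copies of $(s_1(\sigma)-a)/(n-1)\geq0$, sorted; by the first two steps $\Delta$ meets the hypotheses of Theorem \ref{thm:DiagonalElements}. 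The ``only if'' direction is just the Cauchy--Schwarz bound of the first step applied to the $n-1$ off-$a$ diagonal entries of a realising matrix, via Observation \ref{obv:DiagonalElementsNecessity}.

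I do not expect a genuine obstacle: this is a routine consequence of Theorem \ref{thm:DiagonalElements}. The only points deserving a word of care are that the ordering hypothesis in Theorem \ref{thm:DiagonalElements} is harmless (re-sorting), and that the sharp upper bound on $a$ is $s_1(\sigma)$ rather than the Perron root $\rho$ --- under the standing assumptions $s_1(\sigma)\leq\rho$, so (\ref{eq:DiagonalCor1}) is strictly stronger than the trivial bound ``$a$ does not exceed the spectral radius''. (The case $n=1$ is degenerate, both conditions collapsing to $a=\rho=s_1(\sigma)$, which matches the unique $1\times1$ realisation.)
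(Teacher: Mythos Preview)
Your proof is correct and follows essentially the same approach as the paper: both use Cauchy--Schwarz on the $n-1$ remaining diagonal entries for necessity, and for sufficiency both take $\Delta$ to consist of $a$ together with $n-1$ copies of $(s_1(\sigma)-a)/(n-1)$ and invoke Theorem~\ref{thm:DiagonalElements}. The only cosmetic difference is that the paper carries out the Cauchy--Schwarz step in the centred variables $t_i=a_i-s_1(\sigma)/n$ rather than the raw off-$a$ entries, but the algebra and the resulting inequality are identical.
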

\begin{proof}
	Let us write
	\[
		\delta:=\sqrt{(n-1)(ns_2(\sigma)-s_1(\sigma)^2)}.
	\]
	We note that since $\sigma$ is realisable, it must satisfy (\ref{eq:1stJLL}) and hence $\delta$ is real.
	
	The necessity of (\ref{eq:DiagonalCor1}) is obvious. To see that (\ref{eq:DiagonalCor2}) is necessary, suppose $\sigma$ is the spectrum of a nonnegative matrix with diagonal elements $(a_1,a_2,\ldots,a_n)$ and define
	\begin{equation}\label{eq:tiDefDE}
		t_i:=a_i-\frac{s_1(\sigma)}{n} \hspace{2mm} : \hspace{6mm} i=1,2,\ldots,n.
	\end{equation}
	In order to show that (\ref{eq:DiagonalCor2}) is necessary, we need to show that
	\[
		t_i^2\leq\frac{\delta^2}{n^2} \hspace{2mm} : \hspace{6mm} i=1,2,\ldots,n.
	\]
	In fact, since the $a_i$ are labelled arbitrarily, it suffices to show that $t_1^2\leq\delta^2/n^2$.
	
	Since $\sum_{i=1}^na_i=s_1(\sigma)$, it follows that $\sum_{i=1}^nt_i=0$. Hence, by the Cauchy-Schwarz inequality,
	\begin{equation}\label{eq:DiagonalCor3}
		\sum_{i=1}^nt_i^2=t_1^2+\sum_{i=2}^nt_i^2=\left( \sum_{i=2}^nt_i \right)^2+\sum_{i=2}^nt_i^2\geq\frac{n}{n-1}\left( \sum_{i=2}^nt_i \right)^2=\frac{nt_1^2}{n-1}.
	\end{equation}
	Combining (\ref{eq:DiagonalCor3}) with (\ref{eq:CloseWRTl2}) then gives
	\[
		\frac{nt_1^2}{n-1}\leq s_2(\sigma)-\frac{s_1(\sigma)^2}{n},
	\]
	or, equivalently, $t_1^2\leq\delta^2/n^2$, as required.
	
	Now suppose (\ref{eq:DiagonalCor1}) and (\ref{eq:DiagonalCor2}) hold and consider the list $\Delta=(a_1,a_2,$ $\ldots,a_n)$, where
	\begin{align*}
		a_1&=a,\\
		a_i&=\frac{s_1(\sigma)-a}{n-1} \hspace{2mm} : \hspace{6mm} i=2,3,\ldots,n.
	\end{align*}
	Define $t_i$ as before.
	
	By (\ref{eq:DiagonalCor1}), $a_i\geq0$ for all $i=1,2,\ldots,n$ and it is clear that $s_1(\Delta)=s_1(\sigma)$. Furthermore, we note that, since $a_2=a_3=\cdots=a_n$, we must have equality in (\ref{eq:DiagonalCor3}). Combine this with the fact that (\ref{eq:DiagonalCor2}) implies $t_1^2\leq\delta^2/n^2$ and we see that
	\[
		\sum_{i=1}^n\left( a_i-\frac{s_1(\sigma)}{n} \right)^2=\sum_{i=1}^nt_i^2=\frac{nt_1^2}{n-1}\leq\frac{\delta^2}{n(n-1)}=s_2(\sigma)-\frac{s_1(\sigma)^2}{n},
	\]
	which, as noted in Section \ref{sec:biComputation}, is equivalent to $s_2(\Delta)\leq s_2(\sigma)$. Therefore, by Theorem \ref{thm:DiagonalElements}, $\sigma$ is the spectrum of a nonnegative matrix with diagonal elements $\Delta$.
\end{proof}

\begin{remark}
	If there is equality in the JLL condition (\ref{eq:1stJLL}), then it follows from Corollary \ref{cor:MaxMinDiag} that  $\sigma$ can only be realised by a matrix with constant diagonal.
\end{remark}

We conclude this paper by giving an example which shows that (for the fixed structure given in (\ref{eq:AMatrix})) the ordering of the diagonal elements cannot be arbitrary.

\begin{example}
	Let $\sigma:=(4,i,-i,i,-i)$ and $\Delta:=(2,2,0,0,0)$. We have $s_1(\Delta)=s_1(\sigma)=4$ and $s_2(\Delta)=8<12=s_2(\sigma)$. Therefore, by Theorem \ref{thm:DiagonalElements}, $\sigma$ is the spectrum of a nonnegative matrix of the form (\ref{eq:AMatrix}), with the diagonal elements appearing in descending order. Computing the matrix entries $b_2,b_3,b_4,b_5$ from (\ref{eq:bjSol}) (or otherwise), we see that $\sigma$ is realised by the matrix
	\[
		A=\left[
\begin{array}{ccccc}
 2 & 1 & 0 & 0 & 0 \\
 0 & 2 & 1 & 0 & 0 \\
 0 & 0 & 0 & 1 & 0 \\
 0 & 0 & 0 & 0 & 1 \\
 50 & 55 & 16 & 2 & 0
\end{array}
\right].
	\]
	
	On the other hand, if we wish the diagonal elements to appear in ascending order, we are forced to choose
	\[
		A=\left[
\begin{array}{ccccc}
 0 & 1 & 0 & 0 & 0 \\
 0 & 0 & 1 & 0 & 0 \\
 0 & 0 & 0 & 1 & 0 \\
 0 & 0 & 0 & 2 & 1 \\
 4 & -1 & 8 & 2 & 2
\end{array}
\right],
	\]
	which is not nonnegative. Hence the requirement that $a_1\geq a_2\geq\cdots\geq a_n$ in Theorem \ref{thm:DiagonalElements} cannot be omitted.
\end{example}

\bibliographystyle{amsplain}
\bibliography{Bibliography}

\end{document}